\journal{Annals of Pure and Applied Logic}
\newcommand{\A}{\mathfrak{A}}
\newcommand{\F}{\mathbb{F}}
\newcommand{\ket}{\rangle}
\newcommand{\bra}{\langle}
\newcommand{\re}{\mathcal{L}(H)}
\newcommand{\real}{\mathbb{R}}
\newcommand{\co}{\mathbb{C}}
\newcommand{\ts}{\dot{-}}
\newcommand{\interior}{\text{\rm int}}
\newcommand{\tst}{\dot{-}}
\newcommand{\dom}{\rm{dom}}
\begin{document}

\begin{frontmatter}

\title{Quantum mechanics in a metric sheaf: a model theoretic approach}

\author{Maicol A. Ochoa\fnref{myfootnote}}
\address{Department of Chemistry, University of Pennsylvania, Philadelphia PA 19401, USA}
\fntext[myfootnote]{maochoad@gmail.com}

\author{Andr\'es Villaveces\fnref{myfootnote2}}
\address{Departamento de Matem\' aticas, Universidad Nacional de Colombia, Bogot\'a 111321, Colombia}
\fntext[myfootnote2]{avillavecesn@unal.edu.co}

%
%

\begin{abstract}
 We study model-theoretical structures for prototypical physical systems. First, a summary of the model theory of sheaves, adapted
  to the metric case, is presented. In particular, we provide conditions for a
  generalization of the  generic model theorem to metric sheaves.  The essentials of the model theory of metric sheaves already appeared in the form of
  Conference Proceedings\cite{Ochoa2016}. We provide a version of those results, for the sake
  of completeness, and then build metric sheaves for physical systems in the 
  second part of the paper. Specifically, metric sheaves for quantum mechanical systems with pure point and continuous spectra are constructed. In the first case, 
  every fiber is a finite projective Hilbert space determined by the family of invariant subspaces of a given operator with pure point spectrum, and we also consider unitary transformations in a finite-dimensional space. In the second case (an operator with continuous spectrum), every fiber is a two sorted structure of subsets of the Schwartz space of rapidly
  decreasing functions that includes imperfect representations of   position and momentum states.
  The imperfection character is parametrically determined by the elements on the base space and refined in the generic model. Position and momentum operators find a simple representation in every fiber as well as their corresponding unitary operators. These results follow after recasting the algebraic properties of the integral transformations frequently invoked in the description of quantum mechanical systems with continuous spectra. Finally, we illustrate how this construction permits the calculation of the quantum mechanical propagator for a free particle. 
\end{abstract}

\begin{keyword}
Model theory\sep Quantum logic\sep Quantum Mechanics \sep metric structures \sep continuous logic
\MSC[2010] 03B60, 03B80, 03C25, 03C90, 03C98, 81P10, 81Q10, 81S40
\end{keyword}

\end{frontmatter}


\section{Introduction and motivation}


In 1936, John von Neumann and Garrett Birkhoff introduced a
propositional calculus based on the lattice of closed subspaces of a
Hilbert space ordered by inclusion \cite{NEUM}. This lattice is not
Boolean but orthocomplemented, and is therefore different from the
classical propositional calculus. In particular, the classical
distributive law fails. This propositional calculus is meant to
capture essential differences between the classical (Boolean) and
quantum mechanical picture of nature. Many physical and also
philosophical questions have been stated in this context, and
different kinds of quantum logics have been
proposed\cite{bell1986,chiara1989,svozil1996,dalla2002,engesser2011}. For
example, Domenech and Freytes  have presented a contextual logic
\cite{DOME} to investigate to what extent one can refer to physical
objects without contradiction with quantum mechanics. In their work
they introduced a sheaf over the topological space associated with
Boolean sublattices of the ortholattice of closed subspaces of the
Hilbert Space of a physical system. Connections with the
Kochen-Specker theorem were addressed. Later, Abramsky and Mansfield\cite{ABRA}
studied systematically connections between Kochen-Specker phenomena and non-locality, using minimalist sheaf constructions and cohomology to
measure the extent of the non-locality phenomenon.

Independently, Isham~\cite{isham1994} studying the Gell-Mann and Hartle axioms for a generalized  `histories'  approach to quantum theory, arrived to a possible
lattice structure in general history theories, providing also a number of potential models for theories of this type. In later investigations, D\"oring and Isham\cite{doring2008,doring2008B} appealed to sheaf structures, as they concluded that theories in physics are equivalent to representations in a topos of a certain formal language that depends on the system. Classical mechanics, for instance,  arises when the topos is the category of sets. Quantum physics projection operators are associated with the spectral presheaf\cite{doring2008}.

 In yet another alternative approach,
 Zilber\cite{zilber2008,zilber2016} describes the formalism of quantum
 mechanics with geometric semantics using sheaves built out of Zariski geometries\cite{solanki2014,morales2015}. Particular attention is devoted to the structure of the Heisenberg-Weyl algebra generated by position and momentum coordinates in one-dimensional quantum mechanical systems. These results motivated further work by Hirvonen and Hyttinen \cite{Hirvonen2016}, where detailed calculations for the quantum mechanical propagators for a free particle and harmonic oscillator were obtained by using metric ultraproducts of finite-dimensional approximations to the Hilbert space. At the time our work was completed, Bays and Hart were considering the ultralimit of finite dimensional subspaces of the space of tempered distributions to describe $L_2(\mathbb{R})$ in the rigged Hilbert space form (personal communication). All these different approaches have in common that they attempt to understand the true model for quantum mechanics as a limit model of finite substructures or as a very large finite structure whose best mirror is the limit model.
 
In this paper we revisit the question of defining model structures for
prototypical quantum mechanical systems.  The traditional approach to
the description of isolated quantum mechanical systems assumes that
all possible states of a system are well represented by the elements
of a complex Hilbert space. The class of all self-adjoint operators
with domain contained in such Hilbert space is associated with the set
of observables: magnitudes that can be measured through an experiment
in the laboratory. In this sense, the mathematical properties of the
Hilbert space with the structure provided by this class of operators
constitutes a complete physical representation of an isolated quantum
mechanical system. The spectrum of a self-adjoint operator contains
the most likely outcomes of an experimental measurement of the
observable associated with such operators.  In this regard we are
faced with two different scenarios: a quantum mechanical system
defined in separable Hilbert space that admits a basis set in terms of
the eigenvectors of the operator of interest, and one in which the
operator has a continuous spectrum.  The Dirac formulation of quantum
mechanics leads to a unified representation of these two scenarios,
but the nature and structure of the physical Hilbert spaces is
intrinsically different. We will therefore adopt two different
constructions for a quantum mechanical metric sheaf accordingly.

 For the case of an operator with pure point spectrum, quantum
 mechanics can be studied in terms of Projective Hilbert spaces. This
 approach eliminates the redundancy in the association of many vectors
 in a Hilbert space with the same state, i.e., an element $x$ of a
 Hilbert space and any non-vanishing scalar multiple of it represent
 the same physical state. The main disadvantage of this approach is
 that Projective Hilbert spaces inherit neither the vector space nor
 the inner product space structure of the Hilbert spaces. On the
 positive side,  projective Hilbert spaces facilitate a geometrical
 description of quantum mechanics and the study of the local behavior
 of wave functions. Most frequently, the spectrum and the Hilbert
 space are countable and finite substructures from the same are
 considered, allowing for matrix representations of physical observables.
 It is assumed that the properties of these restricted spaces are the
 same as those expected from the infinite dimensional Hilbert
 space. The mathematical literature has plenty of examples with finite
 substructures that do not capture essential properties from the
 infinite structures where they are naturally embedded. We also know
 that the ultraproduct of a family of finite structures can give rise
 to interesting infinite dimensional structures whose theory is well
 described by \L o\'{s}'  theorem.  Thus, in the
 Sec. \ref{sec:discrete} we address the description of infinite
 dimensional projective Hilbert spaces from their finite dimensional
 substructures by defining a sheaf with such substructures as the
 fibers growing on top of a given topological space. The ``spectral
 sheaf'' introduced by  Domenech and Freytes\cite{DOME} resembles the
 sheaf we introduce in Sec. \ref{sec:discrete}, but devoid of the metric structure.

 If the operator has a continuous spectrum, such as in the case of a position and momentum operators,  the space  $\mathcal{L}_2(X, \mu)$  of all continuous square integrable functions defined in the measure space ($X , \mu$) constitutes the Hilbert space for our system, where $X$ is the physical configuration space for the same. For example, for a free particle the configuration space is $X=\real$ with the Borel measure. The formalism introduced by Dirac suggests that one can find representations for position eigenstates in this space, and that the inner product between two such position eigenstates equals the Dirac delta function. This is in stark conflict with the standard definition of an inner product, and we will expand on this subject below. As a resolution, we investigate quantum mechanical systems with continuous spectra by defining a metric sheaf on a subset of the Schwartz space defined on $X$: the space of rapidly decreasing, infinitely differentiable functions $\mathcal{S}(X,\mu)$.  

After this introduction to the physical content of our paper, a few words on
the model theory of sheaves (with metric fibers and continuous
predicates) are in order.  Sheaves can be regarded as supports for cohomology
 constructions and as systems of variable structures themselves. This part of our paper summarizes results
from our earlier work in Ref.\ \cite{Ochoa2016}. For the sake of completeness
we include the following items from that paper.
\begin{itemize}
\item Basic definitions of metric sheaves, together with general discussion.
\item The main lemmas leading to the generic model theorem.
\end{itemize}

We do \emph{not} include the proofs of the generic model theorem or of
some of the lemmas leading to that. The reader may check them in
detail in~\cite{Ochoa2016}.

 The model theory for metric sheaves is partially motivated by Caicedo's
 results~\cite{CAIC}.  We  briefly summarize here some essential aspects of his work.
 Given two topological spaces $X$ and $E$, a
 sheaf over $X$ is defined as the pair $(E,\pi)$ where $\pi:E \to X$ is a
 local homeomorphism. For each $x \in X$, the fiber $E_x=\pi^{-1}(x)$ is
 the universe of a first order structure in a language
 $\mathcal{L}$.  A section $\sigma$ is a
 continuous function defined from an open set $U \subset X$ in $E$
 such that $\sigma \circ \pi$ is the identity map in $U$.  As a
 consequence of these definitions, the image set $\rm{Im}(\sigma)$ is
 an open set in $E$, and sections are in one-to-one correspondence
 with their image sets. Thus the
 satisfaction relation on each fiber can be extended transversally
 along the sheaf, i.e. from fiber to fiber, by defining a forcing
 relation that describes a semantics in the same language
 $\mathcal{L}$ and where the variables can be interpreted in the
 family of sections. Another important property of this construction
 is that whenever a statement is forced in a fiber $E_x$, one can
 always find a neighborhood $U$ of $x$, such that for every $y \in U$
 the same statement is forced in $E_y$. In addition a new
 $\mathcal{L}$-structure (the generic model) is obtained as a quotient
 space and the satisfaction relation is determined by the forcing
 relation defined in the sheaf.  Caicedo connects this generic model
 through a ``generic model theorem'' (or ``forcing theorem'') to
 various other fundamental results in model theory.


 We first take these ideas to the realm
 of continuous logic. In brief, we construct sheaves of metric
 structures as understood and studied in the model theory developed by
 Ben Yaacov, Berenstein, Henson and Usvyatsov \cite{HENS}. In this case,
  logical connectives in metric structures are continuous functions
  from $[0,1]^n$ to $[0,1]$ and the supremum and infimum play the role
  of quantifiers.  Semantics differs from that in classical structures
  by the fact that the satisfaction relation is defined on
  $\mathcal{L}$-conditions rather than on $\mathcal{L}$-formulas,
  where $\mathcal{L}$ is a metric signature. If $\phi(x)$ and
  $\psi(y)$ are $\mathcal{L}$-formulas, expressions of the form
  $\phi(x) \leq \psi(y)$, $\phi(x) < \psi(y)$, $\phi(x) \geq \psi(y)$,
  $\phi(x) > \psi(y)$ are $\mathcal{L}$-conditions. In addition, if
  $\phi$ and $\psi$ are sentences then we say that the condition is
  closed. 

 The
 set $\mathcal{F}=\{0,1,x/2, \tst\}$, where $0$ and $1$ are taken as
 constant functions, $x/2$ is the function taking half of its input
 and $\tst$ is the truncated subtraction, is uniformly dense in the set
 of all connectives\cite{HENS}. We may therefore restrict the set of
 connectives that we use in building formulas to the set
 $\mathcal{F}$. These constitute the set of $\mathcal{F}$-restricted
 formulas.

 We are now in place to define the sheaf of metric structures,
 recall the definitions of pointwise and local forcing on sections and show how to
 define a metric space in families of sections. In section
 \ref{secgenmodel} we show how to construct the metric generic model
 from a metric sheaf. We also show how the semantics of the generic
 model can be understood by the forcing relation and the topological
 properties of the base space of the sheaf. 

 The presentation of the paper is as follows. First, in Sec.\
 \ref{sec:modth}, we provide a summary of results that were presented
 during the Puebla WOLLIC Conference in 2016 and have appeared as
 Conference Proceedings. In particular, we provide the definitions of
 metric sheaves, their semantics (each fiber is a metric continuous
 structure), and state the generic model theorem for that
 context.
 In Sec.\ \ref{sec:discrete} we define a metric sheaf for a countable
 Hilbert space with a self-adjoint operator. In Sec.\
 \ref{projmetsheaf} we define a metric sheaf for a unitary evolution
 operator. Then in Sec.\ \ref{sec:physics} we study a metric sheaf for
 noncommuting observables with continuous spectra. Finally,
in Sec. \ref{sec:conclusion} we summarize and conclude.   

\section{The model theory of metric sheaves}
\label{sec:modth}

This section summarizes the results from our paper\cite{Ochoa2016}: mainly, we
provide the definitions of metric sheaves, the pointwise and local
semantics, the construction of the metric on the sheaf and of generic
models, and finally the generic model theorem. We do not provide the
proofs of these here - the reader is referred to\cite{Ochoa2016} for
details.

Consider a topological space $X$.
 A sheafspace over $X$ is a pair $(E,\pi)$, where $E$ is a
 topological space and $\pi$ is a local homeomorphism from $E$ into
 $X$. A section $\sigma$ is a function from an open set $U$ of $X$ to
 $E$ such that $\pi \circ \sigma =Id_U$. We say that the section is
 global if $U=X$. Sections are determined by their images, as $\pi$ is
 their common continuous inverse function. Besides, images of sections
 form a basis for the topology of $E$. We will refer indistinctly to
 the image set of a section and the function itself. 

  In what follows we assume that a metric language $\mathcal{L}$ is
  given and we omit the prefix $\mathcal{L}$ when talking about
  $\mathcal{L}$-formulas, $\mathcal{L}$-conditions, etc. 

\begin{definition}[Sheaf of metric structures]\label{metricsheafdef}
Let $X$ be a topological space.  A sheaf of metric structures (or, for
short, a ``metric sheaf'') $\A$
over $X$ consists of:
\begin{enumerate}
\item A sheafspace $\: (E, \pi)$ over $X$.
\item For all $x$ in $X$ we associate a metric structure\\
$(\A_x, d) = \left( E_x, \{R_i^{(n_i)}\}_x , \{f_j^{(m_j)}\}_x,
  \{c_k\}_x, \Delta _{R_{i,x}}, \Delta _{f_{i,x}}, d, [0,
  1]\right)$,\\
 where $E_x$ is the fiber $\pi^{-1}(x)$ over $x$, and the following
 conditions hold:
\begin{enumerate}
\item $(E_x, d_x)$ is a complete, bounded metric space of diameter
  $1$.
\item For all $i$, $R_i^\A =\bigcup_{x \in X} R_i^{\A_x}$ is a
  continuous function according to the topology of $\bigcup_{x \in X}
  E_x^{n_j}$. 
\item For all $j$, the function $f_j^\A = \bigcup_x f_j^{\A_x}
  :\bigcup_x E_x^{m_j} \to \bigcup_x E_x$ is a continuous function
  according to the topology of $\bigcup_{x \in X} E_x^{m_j}$.
\item For all $k$, the function $c_k^\A : X \to E$, given by $c_k^\A
  (x)=c_k^{\A_x}$, is a continuous global section.
\item We define the premetric function $d^\A$ by $d^\A=\bigcup _{x \in
    X}d_x: \bigcup _{x \in X} E_x^2 \to [0, 1]$, where $d^\A$ is a
  continuous function according to the topology of $\bigcup_{x \in X} E_x^2$.
\item For all $i$, $\Delta^\A_{ R_i}=\inf_{x \in X}
  (\Delta^{\A_x}_{R_i})$ with the condition that $\inf_{x \in X}
  \Delta^{\A_x}_{R_i}(\varepsilon)>0$ for all $\varepsilon>0$. 
\item For all $j$, $\Delta^\A_{ f_j}=\inf_{x \in X}
  (\Delta^{\A_x}_{f_i})$ with the condition that  $\inf_{x \in X}
  \Delta^{\A_x}_{f_i}(\varepsilon)>0$ for all $\varepsilon>0$.
\item The closed interval $[0,1]$ is a second sort and is provided
  with the usual metric.
\end{enumerate}
\end{enumerate}
\end{definition}

 The space $\bigcup_x E_x^{n}$ has as basic open sets the image of sections
 given by $\bra \sigma_1, \dots, \sigma_n \ket= \sigma_1 \times \dots
 \times \sigma_n \cap \bigcup_x E_x^{n}$. These are the sections of a
 sheaf over $X$ with local homeomorphism $\pi^*$ defined by $\pi^*\bra
 \sigma_1(x), \dots, \sigma_n(x) \ket= x$. We drop the symbol $^*$
 from our notation when talking about this local homeomorphism but it
 must be clear that this local homeomorphism differs from the function
 $\pi$ used in the definition of the topological sheaf.

The induced function $d^\A$ (the ``global distance function'') is not
necessarily a metric nor a
pseudometric. Thus, we cannot expect the sheaf just defined to be a
metric structure, in the sense of continuous logic. Indeed, we want to
build the local semantics on the sheaf so that for a given sentence
$\phi$, if $\phi$ is true at some $x \in X$, then we can find a
neighborhood $U$ of
$x$ such that for every $y$ in $U$, $\phi$ is also true. In order to
accomplish this task, first note that semantics in
continuous logic is not defined on formulas but on conditions. Since
the truth of the condition ``$\phi<\varepsilon$'' for $\varepsilon$
small can be thought as a good approximation to the notion of $\phi$
being true in a first order model, one may choose this as the
condition to be forced in our metric sheaf.  Therefore, for a given
real number $\varepsilon \in (0,1)$, we consider conditions of the
form $\phi < \varepsilon$ and $\phi > \varepsilon$. Our first result
comes from investigating to what extent the truth in a fiber
``spreads'' onto the sheaf.
 
\begin{lemma}[Truth continuity in restricted cases]\label{conti}

\begin{itemize}
\item Let $\varepsilon$ be a real number, $x \in X$, $\phi$ an
  $\mathcal{L}$-formula composed only of the logical metric
  connectives and perhaps the quantifier $\inf$.  If $\A_x \models
  \phi (\sigma(x)) < \varepsilon$, then there exists an open
  neighborhood $U$ of $x$, such that for every $y$ in $U$, $\A_y
  \models \phi(\sigma(y)) < \varepsilon$.
\item Let $\varepsilon$ be a real number, $x \in X$, $\phi$ an
  $\mathcal{L}$-formula composed only of the logical metric
  connectives and perhaps the quantifier $\sup$. If $\A_x \models \phi
  (\sigma(x)) > \varepsilon$, then there exists an open neighborhood
  $U$ of $x$, such that for every $y$ in $U$, $\A_y \models
  \phi(\sigma(y)) > \varepsilon$.
\end{itemize}
\end{lemma}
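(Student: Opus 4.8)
The plan is to reformulate both items as one-sided continuity (semicontinuity) statements and to prove them by induction on the complexity of $\phi$. For the first item it suffices to show that the real-valued map $F_\phi\colon y\mapsto \phi^{\A_y}(\sigma(y))$, defined on the common domain of the section tuple $\sigma$, is upper semicontinuous; then the superlevel complement $\{y : F_\phi(y)<\varepsilon\}$ is open and, containing $x$, is the required neighbourhood $U$. The second item is the dual statement, that $F_\phi$ is lower semicontinuous, so that $\{y : F_\phi(y)>\varepsilon\}$ is open. Because the truncated subtraction $\tst$ reverses monotonicity in its second slot, the two semicontinuity statements feed into one another, so I would in fact prove them simultaneously and carry both hypotheses through the induction.

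First I would settle the base case. If $\phi$ is atomic it is either $R_i(t_1,\dots,t_{n_i})$ or $d(t_1,t_2)$ for terms $t_\ell$. Each term, evaluated along $\sigma$, is a section of the appropriate power sheaf, since the function symbols are interpreted by the globally continuous maps $f_j^\A$ and tuples and composites of sections are again sections; feeding these into the continuous predicate $R_i^\A$, or into the continuous premetric $d^\A$ (continuity of both being part of Definition \ref{metricsheafdef}), shows that $F_\phi$ is genuinely \emph{continuous}, hence both upper and lower semicontinuous. For the connective step, every element of $\mathcal{F}=\{0,1,x/2,\tst\}$ is a continuous function $[0,1]^n\to[0,1]$, and an $\mathcal{F}$-restricted quantifier-free formula is exactly such a connective applied to atomic pieces; composition of continuous maps is continuous, so for quantifier-free $\phi$ the map $F_\phi$ is continuous outright and no semicontinuity is lost at this level. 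The only delicate point is that $\tst$ is decreasing in its second argument, so an upper bound on $\psi_1\tst\psi_2$ is governed by an upper bound on $\psi_1$ together with a \emph{lower} bound on $\psi_2$; this is precisely what forces the induction to transport both semicontinuity directions at once.

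The quantifier step is the crux. Suppose $\phi=\inf_w\psi(w,\bar x)$ and that $F_\psi$ is upper semicontinuous for every choice of witness section. The key structural fact is that images of sections form a basis for the topology of the sheafspace, so, $\pi$ being a local homeomorphism, every point of a fibre equals $\tau(y)$ for some local section $\tau$; consequently, for each fixed $y$,
\[
F_\phi(y)=\inf_{a\in E_y}\psi^{\A_y}(a,\sigma(y))=\inf_{\tau}\,\psi^{\A_y}(\tau(y),\sigma(y)),
\]
the infimum ranging over local sections $\tau$ defined near $y$. An infimum of upper-semicontinuous functions is upper semicontinuous, which gives the claim; equivalently, and more constructively, if $F_\phi(x)<\varepsilon$ I would pick a witness $a_0\in E_x$ with $\psi^{\A_x}(a_0,\sigma(x))<\varepsilon$, lift it to a section $\tau$ with $\tau(x)=a_0$, invoke the inductive hypothesis for $\psi$ to obtain a neighbourhood on which $\psi^{\A_y}(\tau(y),\sigma(y))<\varepsilon$, and conclude $F_\phi(y)\le\psi^{\A_y}(\tau(y),\sigma(y))<\varepsilon$ there. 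The $\sup$ case of the second item is handled dually, using that a supremum of lower-semicontinuous functions is lower semicontinuous together with the same lifting of witnesses.

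The hard part is exactly this quantifier step: it is where full continuity degrades to one-sided continuity, and making it go through requires two ingredients working in tandem. One is the basis-of-sections property of the sheafspace, without which the fibrewise infimum could not be recognized as an infimum of the already-controlled functions $y\mapsto\psi^{\A_y}(\tau(y),\sigma(y))$. The other is the simultaneous bookkeeping of the upper- and lower-semicontinuity hypotheses, which is unavoidable because the sign change in $\tst$ turns an $\inf$ sitting in the second slot of a subtraction into something that must be estimated from the $\sup$ side; keeping both inductive invariants in play is what lets the two separately-stated items be proved by a single induction.
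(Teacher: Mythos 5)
Your base case and your quantifier step are sound, and the constructive form of the $\inf$ step (lift a witness $a_0\in E_x$ to a local section $\tau$ using the fact that images of sections form a basis for $E$, then apply the inductive hypothesis to $\psi(\tau,\sigma)$) is exactly the right mechanism; note only that the functions $y\mapsto\psi^{\A_y}(\tau(y),\sigma(y))$ live on $\dom(\tau)\cap\dom(\sigma)$ rather than on all of $\dom(\sigma)$, which your pointwise argument already accommodates. The paper itself does not reproduce a proof here (it defers to \cite{Ochoa2016} and remarks only that the lemma ``can be proven by induction using density''), so the one substantive divergence in route is that you never invoke the uniform density of $\mathcal{F}=\{0,1,x/2,\tst\}$ in the set of all connectives: you handle arbitrary continuous connectives directly at the quantifier-free level, where composition of continuous maps suffices, whereas the intended argument first reduces to $\mathcal{F}$-restricted formulas and then transfers strict inequalities across a uniform $\delta$-approximation of the general connective.

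There is, however, a genuine gap at the point you yourself flag as delicate: the interaction of $\tst$ (or any connective antitone in some slot) with a quantifier sitting in that slot. Your plan is to ``carry both hypotheses through the induction,'' but the two items attach to disjoint syntactic classes --- the first concerns formulas containing only $\inf$, the second only $\sup$ --- so for a formula $\psi_1\tst\psi_2$ falling under the first item with $\psi_2=\inf_w\chi$, upper semicontinuity of $F_{\psi_1\tst\psi_2}$ requires \emph{lower} semicontinuity of $F_{\psi_2}$, and neither inductive invariant supplies it: the first item gives only upper semicontinuity for $\psi_2$, and the second is silent because $\psi_2$ contains $\inf$, not $\sup$. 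The induction therefore does not close, and this is not a bookkeeping defect: one can build a metric sheaf (a single point $a$ with $R(a)=1/2$ over $x$, and two-point fibers over each $y\neq x$ containing an extra point $b_y$ with $R(b_y)=0$ lying only on sections whose domains avoid $x$) in which $y\mapsto\inf_{w}R^{\A_y}(w)$ is upper but not lower semicontinuous at $x$, so that $1\tst\inf_w R(w)$ violates the conclusion of the first item. The statement is only available when quantifiers occur in positions where all surrounding connectives are monotone increasing (which covers its actual uses, and is why the point-forcing clause for $\phi\tst\psi$ is later \emph{defined} by an explicit case analysis rather than read off from fiberwise satisfaction); your proof should either restrict the syntactic class accordingly or strengthen the inductive invariant to full continuity for every subformula occurring in an antitone slot, i.e.\ forbid quantifiers there.
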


  In particular, the above lemma is true for $\mathcal{F}$-restricted
  sentences. Lemma \ref{conti} can be proven by induction using
  density. This approach provides the
  setting to define the point-forcing relation on conditions. 

\begin{definition}[Point Forcing]
Given a metric sheaf $\; \A$ over a topological space $X$, we define
the relation $\Vdash_x$ on the set of all conditions of the form $\phi
< \varepsilon$ and $\phi > \varepsilon$ (where $\phi$ is an
$\mathcal{L}$-statement, $\varepsilon$ is an arbitrary real number in
$(0,1)$ and $x\in X$). Furthermore, where in our definition
$\phi=\phi(v_1,\cdots,v_n)$ has free variables,  the
forcing at $x$ will depend on specifying \emph{local sections}
$\sigma_1,\cdots,\sigma_n$ of the
sheaf defined on open sets around the point $x$. Where necessary (for
atomic formulas and the quantifier stage) we will indicate this.

Our definition is by induction on the complexity of
$\mathcal{L}$-statements, and given for every $\varepsilon \in (0,1)$
simultaneously.


Atomic formulas
\begin{itemize}
\item $\A \Vdash _x d(\sigma_1, \sigma_2)<\varepsilon
\iff d_x(\sigma_1(x), \sigma_2(x))<\varepsilon
$
\item
$
\A \Vdash _x R(\sigma_1, \dots, \sigma_n)< \varepsilon 
\iff R^{\A_x}(\sigma_1(x), \dots, \sigma_n(x)) < \varepsilon
$
\item similar to the previous two, but with $>$ instead of $<$
\end{itemize}

Logical connectives
\begin{itemize}
\item
$
\A \Vdash _x \max(\phi, \psi)<\varepsilon
\iff \A \Vdash _x \phi < \varepsilon $ and $ \A \Vdash _x \psi <\varepsilon
$
\item
$
\A \Vdash _x \max(\phi, \psi)>\varepsilon 
\iff \A \Vdash _x \phi > \varepsilon $ or $ \A \Vdash _x \psi > \varepsilon
$
\item
$
\A \Vdash _x \min(\phi, \psi) < \varepsilon
\iff \A \Vdash _x \phi < \varepsilon $ or $ \A \Vdash _x \psi < \varepsilon
$
\item
$
\A \Vdash _x \min(\phi, \psi) > \varepsilon
\iff \A \Vdash _x \phi > \varepsilon $ and $ \A \Vdash _x \psi > \varepsilon
$
\item
$
\A \Vdash _x 1 \tst \phi < \varepsilon
\iff \A \Vdash _x \phi > 1 - \varepsilon 
$
\item
$
\A \Vdash _x 1 \tst \phi > \varepsilon
\iff \A \Vdash _x \phi < 1 - \varepsilon 
$
\item $\A \Vdash _x \phi \tst \psi < \varepsilon \iff \A_x \models \psi=1$ or $\A_x \models \psi = r$ for some $r \in (0,1)$ and one of the following holds:\\
i) $\A \Vdash_x \phi < r $\\
\hspace{2.5cm} ii) $\A \nVdash_x \phi < r$ and $\A \nVdash_x \phi > r$\\
\hspace{2.5cm} iii) $\A \Vdash_x \phi > r $ and $ \A \Vdash_x \phi < r + \delta$ for some $\delta \in (0, \varepsilon)$.
\item $\A \Vdash _x \phi \tst \psi > \varepsilon \iff \A \Vdash_x  \phi > r + \varepsilon$ with $r$ such that $\A_x \models \psi=r$
\end{itemize}

Quantifiers
\begin{itemize}
\item
$
\A \Vdash _x \inf _\sigma \phi(\sigma) < \varepsilon
\iff $There exists a section $\mu$ such that $ \A \Vdash _x \phi (\mu )< \varepsilon$.
\item
$
\A \Vdash _x \inf _\sigma \phi(\sigma) > \varepsilon
\iff $
There exists an open set $U \ni x$ and a real number $\delta_x>0$ such that for every $y \in U$ and every section $\mu$ defined on $y$, $ \A \Vdash _y \phi (\mu )> \varepsilon + \delta_x$
\item
$
\A \Vdash _x \sup _\sigma \phi(\sigma) <\epsilon 
\iff 
$There exists an open set $U \ni x$ and a real number $\delta_x$ 
such that for every $y \in U$ and every section $\mu$ defined on $y$
 $\A \Vdash_y \phi(\mu) < \varepsilon - \delta_x$.
\item
$
\A \Vdash _x \sup _\sigma \phi(\sigma) >\epsilon 
\iff $ There exists a section $\mu$ defined on $x$ such that $
 \A \Vdash_x \phi(\mu) > \varepsilon $
\end{itemize}
\end{definition}

The above definition and the previous lemma lead to the equivalence
between $\A \Vdash _x \inf _\sigma (1 \tst \phi) > 1 \tst \varepsilon $
and $\A \Vdash _x \sup _\sigma \phi < \varepsilon$. More important, we
can state the truth continuity lemma for the forcing relation on
sections as follows.

\begin{lemma}
  Let $\phi(\sigma)$ be an $\mathcal{F}-$restricted formula. Then 
  \begin{enumerate}
  \item $\A \Vdash _x \phi(\sigma) < \varepsilon$ iff there exists $U$
    open neighborhood of $x$ in $X$ such that  $\A \Vdash _y
    \phi(\sigma) < \varepsilon$ for all $y \in U$.
\item $\A \Vdash _x \phi(\sigma) > \varepsilon$ iff there exists $U$
  open neighborhood of $x$ in $X$ such that  $\A \Vdash _y
  \phi(\sigma) > \varepsilon$ for all $y \in U$.
  \end{enumerate}
 \end{lemma}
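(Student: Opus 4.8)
The plan is to prove the forward direction by a simultaneous induction on the complexity of $\phi$, establishing (1) and (2) together; the reverse direction of both parts is immediate, since any open neighborhood $U$ of $x$ contains $x$, so taking $y=x$ recovers the forcing at $x$. The simultaneity of the induction is essential because the point-forcing clauses for $1 \tst \phi$ reduce the condition $1 \tst \phi < \varepsilon$ to $\phi > 1-\varepsilon$ and the condition $1 \tst \phi > \varepsilon$ to $\phi < 1-\varepsilon$; thus each half of the statement feeds into the other one complexity level down, and they must be carried in tandem.

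For the atomic cases $\phi = d(\sigma_1,\sigma_2)$ and $\phi = R(\sigma_1,\dots,\sigma_n)$, I would invoke the continuity of $d^\A$ and of $R^\A$ on $\bigcup_x E_x^{n}$ guaranteed by Definition \ref{metricsheafdef}, together with the continuity of the sections $\sigma_i$. The composite maps $y \mapsto d_y(\sigma_1(y),\sigma_2(y))$ and $y \mapsto R^{\A_y}(\sigma_1(y),\dots,\sigma_n(y))$ are then continuous into $[0,1]$, so the preimage of $[0,\varepsilon)$ (respectively of $(\varepsilon,1]$) is exactly the open neighborhood $U$ we need. The clauses for $\max$ and $\min$ unwind, by definition, into conjunctions or disjunctions of forcing statements about the immediate subformulas; applying the inductive hypothesis to each subformula and then intersecting (for a conjunction) or selecting one (for a disjunction) among finitely many neighborhoods yields $U$. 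The clause for $1 \tst \phi$ is dispatched by the \emph{other} half of the inductive hypothesis, as noted above.

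The quantifier steps split into two flavors. The \emph{existential} clauses, $\inf_\sigma \phi < \varepsilon$ and $\sup_\sigma \phi > \varepsilon$, supply a witnessing section $\mu$ with the corresponding forcing of $\phi(\mu)$ at $x$; I would apply the inductive hypothesis to $\phi(\mu)$ and intersect the resulting neighborhood with the open domain of $\mu$, obtaining a $U$ on which $\mu$ witnesses the quantified condition at every point. The \emph{local} clauses, $\inf_\sigma \phi > \varepsilon$ and $\sup_\sigma \phi < \varepsilon$, already have openness built into their definitions: their witnesses are an open $U \ni x$ together with a uniform margin $\delta_x>0$ valid throughout $U$, and the same pair $(U,\delta_x)$ certifies the condition at every $y\in U$, so nothing further is required.

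The main obstacle is the general truncated-subtraction clause for $\phi \tst \psi$, whose definition entangles the fiber-satisfaction relation $\models$ (through the exact-value condition $\A_x \models \psi = r$) with the forcing relation on $\phi$, and in which the threshold $r$ \emph{moves} with the base point. My plan is to control this movement through Lemma \ref{conti}: both of its bullets together force the fiber-value of $\psi$ to vary continuously, so on a small neighborhood of $x$ it stays within $(r-\eta,r+\eta)$ (or, in the case $\A_x\models\psi=1$, drops only slightly below $1$, landing in the $\psi=r'$ alternative). Unwinding the three alternatives (i)--(iii) identifies $\A \Vdash_x \phi \tst \psi < \varepsilon$ with a \emph{strict} value inequality of the form $\phi_x < r + \varepsilon$, which, being strict and involving only continuously varying quantities, persists on a neighborhood. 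The genuinely delicate point is reconciling the moving threshold $r' = \psi(y)$ at nearby points with the fixed-threshold forcing statements $\phi > r$, $\phi < r+\delta$ supplied by the inductive hypothesis: one must shrink $U$ so that $\eta$ is small relative to the slack $\varepsilon - \delta$, and one must handle the boundary alternative (ii) — where $\A \nVdash_x \phi < r$ and $\A \nVdash_x \phi > r$ — under which the \emph{active} alternative among (i)--(iii) may itself switch as the base point moves. Showing that at each $y\in U$ one of the three alternatives holds for the perturbed value $r'$ is the crux of the whole induction, and completing this margin bookkeeping finishes the proof.
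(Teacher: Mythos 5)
Your overall strategy---a simultaneous induction on both inequalities, with the reverse implications trivial because $x\in U$---is the right one, and your treatment of the atomic, $\max$, $\min$, $1\tst\phi$, and quantifier cases is correct and complete: the atomic cases follow from continuity of $d^{\A}$ and $R^{\A}$ composed with the (continuous) sections, the existential quantifier clauses propagate by applying the inductive hypothesis to the witnessing section and intersecting with its domain, and the ``local'' quantifier clauses already carry their own neighborhood and margin. The paper itself omits the proof of this lemma (it defers to the conference proceedings), so there is no in-text argument to compare against, but the induction you describe is the expected one.

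The genuine gap is the general case $\phi\tst\psi$, which you correctly identify as the crux but do not close, and the two tools you propose for closing it are not in fact available. First, you invoke Lemma~\ref{conti} to conclude that the fiber value of $\psi$ varies continuously near $x$; but each bullet of that lemma applies only to formulas built from connectives and \emph{at most one} of the two quantifiers ($\inf$ for the $<$ direction, $\sup$ for the $>$ direction). A general $\mathcal{F}$-restricted $\psi$ may contain both, and then neither bullet applies, so you have no control over how $r'=\psi^{\A_y}$ moves as $y$ varies. In particular, for the clause $\A\Vdash_x\phi\tst\psi>\varepsilon$ you need $\A\Vdash_y\phi>r'+\varepsilon$ with a possibly \emph{larger} $r'$, while the inductive hypothesis only yields persistence of $\phi>r+\varepsilon$ for the fixed threshold $r$. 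Second, alternative~(ii) of the $<$ clause is a conjunction of \emph{negative} forcing statements, $\A\nVdash_x\phi<r$ and $\A\nVdash_x\phi>r$; the lemma being proved propagates positive forcing only, and non-forcing at $x$ need not persist on any neighborhood, so you cannot conclude that one of (i)--(iii) holds at nearby $y$ without a separate argument showing that whichever positive statement does appear at $y$ lands you in (i) or (iii) with the required $\delta<\varepsilon$ --- which is exactly the bookkeeping you defer. As written, the induction does not go through at this step. To repair it you would need either to restrict attention to formulas in which $\tst$ occurs only in the form $1\tst\phi$ (the form actually used elsewhere in the paper, for which your argument is complete), or to prove an auxiliary semicontinuity statement for fiber values of $\mathcal{F}$-restricted formulas and then carry out the (i)--(iii) case analysis with explicit margins.
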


We can also define the point-forcing relation for non-strict inequalities by
\begin{itemize}
\item $\A \Vdash _x \phi \leq \varepsilon$ iff $\A \nVdash _x \phi > \varepsilon $ and
\item $\A \Vdash _x \phi \geq \varepsilon$ iff $\A \nVdash _x \phi < \varepsilon $,
\end{itemize}
for $\mathcal{F}-$restricted formulas. This definition allows us to show the following proposition.

\begin{proposition}
 Let $0<\varepsilon' < \varepsilon$ be real numbers. Then
 \begin{enumerate}
 \item If $\A \Vdash_x \phi(\sigma) \leq \varepsilon '$ then $\A \Vdash_x \phi(\sigma) < \varepsilon$.
\item If $\A \Vdash_x \phi(\sigma) \geq \varepsilon$ then $\A \Vdash_x \phi(\sigma) > \varepsilon '$.
 \end{enumerate}
\end{proposition}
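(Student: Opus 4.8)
The plan is to notice first that the two implications are contrapositives of one another, so both follow from a single comparison (or \emph{no-gap}) statement: for every $\mathcal{F}$-restricted $\phi$ and all $0<\varepsilon'<\varepsilon<1$,
\[
\A \Vdash_x \phi(\sigma) < \varepsilon \quad\text{or}\quad \A \Vdash_x \phi(\sigma) > \varepsilon' .
\]
Unfolding the definitions of $\leq$ and $\geq$, part (1) reads ``$\A\nVdash_x\phi>\varepsilon'$ implies $\A\Vdash_x\phi<\varepsilon$'' and part (2) reads ``$\A\nVdash_x\phi<\varepsilon$ implies $\A\Vdash_x\phi>\varepsilon'$''; each is exactly the displayed disjunction written as an implication ($\neg P\Rightarrow Q$ versus $\neg Q\Rightarrow P$, with the same disjunction $P\vee Q$). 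So I would prove the disjunction and then read off both parts. I would establish it by induction on the complexity of $\phi$, for all pairs $\varepsilon'<\varepsilon$ simultaneously.

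For the atomic base case the point-forcing clauses identify $\A\Vdash_x\phi<\delta$ (resp.\ $>\delta$) with the genuine inequality $r<\delta$ (resp.\ $r>\delta$), where $r$ is the value of the atomic formula in the fiber $\A_x$; since $r<\varepsilon$ or $r>\varepsilon'$ whenever $\varepsilon'<\varepsilon$, the base case is immediate. For $\max$, $\min$ and $1\tst(\cdot)$ I would unfold the corresponding clauses and apply the inductive hypothesis: assuming $\A\nVdash_x\max(\phi,\psi)>\varepsilon'$ gives $\A\nVdash_x\phi>\varepsilon'$ and $\A\nVdash_x\psi>\varepsilon'$, whence the hypothesis yields $\A\Vdash_x\phi<\varepsilon$ and $\A\Vdash_x\psi<\varepsilon$, i.e.\ $\A\Vdash_x\max(\phi,\psi)<\varepsilon$; $\min$ is dual, and $1\tst\phi$ reduces to the hypothesis for $\phi$ after the order-reversing substitution $\varepsilon\mapsto 1-\varepsilon$.

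The truncated difference $\phi\tst\psi$ needs more bookkeeping, since its ``$<$'' clause branches into cases (i)--(iii) and refers to the fiber value $r$ with $\A_x\models\psi=r$. If $\A_x\models\psi=1$ the first disjunct of the clause settles the matter; otherwise let $r$ be that value, assume $\A\nVdash_x(\phi\tst\psi)>\varepsilon'$, i.e.\ $\A\nVdash_x\phi>r+\varepsilon'$, and split on the position of $\phi$ relative to $r$: if $\A\Vdash_x\phi<r$ case (i) applies; if $\phi$ is forced neither below nor above $r$ case (ii) applies; and if $\A\Vdash_x\phi>r$, then applying the inductive hypothesis to $\phi$ with the pair $r+\varepsilon'<r+\delta$ for a chosen $\delta\in(\varepsilon',\varepsilon)$ yields $\A\Vdash_x\phi<r+\delta$, which is case (iii). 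In each branch $\A\Vdash_x(\phi\tst\psi)<\varepsilon$ follows.

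The main obstacle is the quantifier step, where the clauses for $\A\Vdash_x\inf_\sigma\phi<\varepsilon$ and $\A\Vdash_x\sup_\sigma\phi>\varepsilon$ are existential (a single section at $x$), whereas those for $\A\Vdash_x\inf_\sigma\phi>\varepsilon$ and $\A\Vdash_x\sup_\sigma\phi<\varepsilon$ are uniform (a neighborhood $U\ni x$, a margin $\delta_x>0$, and \emph{all} sections on \emph{all} $y\in U$). For $\inf$ I would assume $\A\nVdash_x\inf_\sigma\phi<\varepsilon$, so every section $\mu$ defined at $x$ has $\A\nVdash_x\phi(\mu)<\varepsilon$, and the inductive hypothesis gives $\A\Vdash_x\phi(\mu)>\varepsilon'$ for each such $\mu$. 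The hard part is upgrading this pointwise, section-by-section conclusion to the uniform statement required for $\inf>\varepsilon'$ --- one $U$ and one $\delta_x$ serving all nearby points and all sections there, including sections not defined at $x$. Here I would invoke the truth-continuity lemma for the forcing relation to spread each strict inequality to a neighborhood, together with the uniform lower bounds on the continuity moduli $\Delta^\A_{R_i},\Delta^\A_{f_j}$ built into Definition \ref{metricsheafdef}, which supply the cross-fiber uniformity; the $\sup$ case is dual. Extracting a common $U$ and $\delta_x$ from these ingredients is the crux, and the remaining cases are routine unfolding.
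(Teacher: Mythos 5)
The paper offers no proof of this proposition (like the rest of Section~\ref{sec:modth} it defers to the proceedings version), so there is no argument to compare yours against line by line; I can only assess your proposal on its own terms. Your reduction of both parts to the single no-gap disjunction ``$\A \Vdash_x \phi(\sigma) < \varepsilon$ or $\A \Vdash_x \phi(\sigma) > \varepsilon'$'' is correct, and your base case and connective cases ($\max$, $\min$, $1 \tst (\cdot)$, $\phi \tst \psi$) go through. But the quantifier step, which you rightly flag as the crux, is a genuine gap that the tools you name cannot close. From $\A \nVdash_x \inf_\sigma \phi(\sigma) < \varepsilon$ the inductive hypothesis controls only sections \emph{defined at} $x$, and the truth-continuity lemma then yields one neighborhood $U_\mu$ \emph{per such section} $\mu$; nothing produces a single $U$ and margin $\delta_x$ valid for \emph{all} sections defined at \emph{all} $y \in U$, in particular for sections whose domains accumulate at $x$ without containing it. The moduli $\Delta^\A_{R}$, $\Delta^\A_{f}$ of Definition~\ref{metricsheafdef} constrain how the symbols vary with their arguments, not which elements the nearby fibers contain, so they are irrelevant to this uniformity.

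Worse, the disjunction genuinely fails for quantified formulas, so the step cannot be repaired in the stated generality. Take $X=[0,1]$, a unary $[0,1]$-valued predicate $R$, a global section $\sigma_a$ with $R(\sigma_a(y))=1$ for all $y$, and a section $\sigma_b$ with $\dom(\sigma_b)=(0,1]$ and $R(\sigma_b(y))=0$ (pad the fiber over $0$ with a second point of $R$-value $1$ at distance $1$ so that Definition~\ref{metricsheafdef} is satisfied). At $x=0$ every section defined at $x$ gives $R$ the value $1$, so $\A \nVdash_0 \inf_\sigma R(\sigma) < \varepsilon$ for any $\varepsilon<1$; yet every neighborhood of $0$ meets $\dom(\sigma_b)$, where $R$ takes the value $0$, so $\A \nVdash_0 \inf_\sigma R(\sigma) > \varepsilon'$ for any $\varepsilon'>0$. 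Thus $\A \Vdash_0 \inf_\sigma R(\sigma) \leq \varepsilon'$ while $\A \nVdash_0 \inf_\sigma R(\sigma) < \varepsilon$, contradicting part (1). Your induction is therefore complete and correct only through the quantifier-free layer; for the full claim one must either restrict the class of formulas or add hypotheses (on the sheaf, the point, or the filter), and your proof as written cannot be finished at the quantifier step.
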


 The fact that sections may have different domains brings additional
 difficulties to the problem of defining a metric function with the
 triangle inequality holding for an arbitrary triple. However, we do
 not need to consider the whole set of sections of a sheaf but only
 those whose domain is in a filter of open sets (as will be evident in
 the construction of the ``Metric Generic Model'' below). One may
 consider a construction of such a metric by defining the ultraproduct
 and the ultralimit for an ultrafilter of open sets. However, the
 ultralimit may not be unique since $E$ is not always a compact set in
 the topology defined by the set of sections. In fact, it would only
 be compact if each fiber was finite. Besides, it may not be the case
 that the ultraproduct is complete. Thus, we proceed in a different
 way by observing that a pseudometric can be defined for the set of
 sections with domain in a given filter.

\begin{lemma}\label{lemmapseudometric}
Let $\mathbb{F}$ be a filter of open sets. For all sections $\sigma$
and $\mu$ with domain in $\mathbb{F}$, let the family
$\mathbb{F}_{\sigma \mu}=\{U\cap \dom(\sigma)\cap \dom(\mu) | U \in
\F\}$. Then the function
\[
\rho_{\mathbb{F}}(\sigma, \mu)=\inf_{U \in \F _{\sigma \mu}} \;\sup_{x
  \in U} d_x (\sigma(x), \mu(x))
\]
is a pseudometric in the set of sections $\sigma$ such that
$\dom(\sigma ) \in \mathbb{F}$.
\end{lemma}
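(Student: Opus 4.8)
The plan is to verify the three defining properties of a pseudometric directly: non-negativity with $\rho_{\F}(\sigma,\sigma)=0$, symmetry, and the triangle inequality. Non-negativity is immediate since each $d_x$ takes values in $[0,1]$, so every inner supremum is non-negative and hence so is the infimum. For $\rho_{\F}(\sigma,\sigma)=0$, note that $d_x(\sigma(x),\sigma(x))=0$ for all $x$, so $\sup_{x\in U} d_x(\sigma(x),\sigma(x))=0$ for every $U\in\F_{\sigma\sigma}$, and the infimum over a nonempty family of zeros is $0$; here I would remark that $\F_{\sigma\sigma}$ is nonempty because $\F$ is a filter and $\dom(\sigma)\in\F$. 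Symmetry follows at once from the symmetry of each fiber metric $d_x$, since $d_x(\sigma(x),\mu(x))=d_x(\mu(x),\sigma(x))$ pointwise, and the sets $\F_{\sigma\mu}$ and $\F_{\mu\sigma}$ coincide.

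The substantive step is the triangle inequality, namely $\rho_{\F}(\sigma,\nu)\le\rho_{\F}(\sigma,\mu)+\rho_{\F}(\mu,\nu)$ for sections with domains in $\F$. First I would fix $\varepsilon>0$ and choose, by definition of infimum, sets $U_1\in\F_{\sigma\mu}$ and $U_2\in\F_{\mu\nu}$ such that $\sup_{x\in U_1} d_x(\sigma(x),\mu(x))\le\rho_{\F}(\sigma,\mu)+\varepsilon$ and $\sup_{x\in U_2} d_x(\mu(x),\nu(x))\le\rho_{\F}(\mu,\nu)+\varepsilon$. The key maneuver is to pass to the common refinement $W=U_1\cap U_2$, which again lies in the filter: each $U_i$ has the form $V_i\cap\dom(\sigma)\cap\dom(\mu)$ or $V_i\cap\dom(\mu)\cap\dom(\nu)$ with $V_i\in\F$, so $W$ contains a set of the form $V\cap\dom(\sigma)\cap\dom(\mu)\cap\dom(\nu)$ with $V=V_1\cap V_2\in\F$, and this set belongs to $\F_{\sigma\nu}$ up to shrinking. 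On $W$ all three sections are defined simultaneously, so for each $x\in W$ the fiber triangle inequality gives $d_x(\sigma(x),\nu(x))\le d_x(\sigma(x),\mu(x))+d_x(\mu(x),\nu(x))$. Taking the supremum over $x\in W$ and using that a supremum of a sum is bounded by the sum of the suprema, then bounding each term by the corresponding quantity over $U_1$ and $U_2$, yields $\sup_{x\in W} d_x(\sigma(x),\nu(x))\le\rho_{\F}(\sigma,\mu)+\rho_{\F}(\mu,\nu)+2\varepsilon$. Since $\rho_{\F}(\sigma,\nu)$ is an infimum over $\F_{\sigma\nu}$ and $W$ (suitably intersected with $\dom(\sigma)\cap\dom(\nu)$) is a member, $\rho_{\F}(\sigma,\nu)$ is bounded by this supremum, and letting $\varepsilon\to 0$ gives the result.

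The main obstacle is precisely the bookkeeping around the families $\F_{\sigma\mu}$: because different pairs of sections have different domain intersections, I must be careful that the refined set $W$ genuinely lies in the family $\F_{\sigma\nu}$ relevant to the left-hand side, rather than in $\F_{\sigma\mu}$ or $\F_{\mu\nu}$. This is exactly why the filter hypothesis is essential — it guarantees that the intersection $V_1\cap V_2$ stays in $\F$ and that restricting to the smaller open set only increases (weakly) each inner supremum in the right direction for the infimum bound. I would emphasize that this is a \emph{pseudometric} and not a metric: two distinct sections can agree on a cofinal (in $\F$) family of open sets, forcing $\rho_{\F}(\sigma,\mu)=0$ without $\sigma=\mu$, which is exactly the degeneracy that the later quotient construction of the generic model is designed to collapse.
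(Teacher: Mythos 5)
Your proposal is correct and follows the only natural route: the paper itself omits the proof (deferring to the cited proceedings), and the argument there is the same direct verification, with the triangle inequality handled by intersecting near-optimal sets from $\F_{\sigma\mu}$ and $\F_{\mu\nu}$. The one spot you leave slightly vague (``belongs to $\F_{\sigma\nu}$ up to shrinking'') closes cleanly: since $\dom(\mu)\in\F$, the set $W=V_1\cap V_2\cap\dom(\sigma)\cap\dom(\mu)\cap\dom(\nu)$ is exactly $U\cap\dom(\sigma)\cap\dom(\nu)$ for $U=V_1\cap V_2\cap\dom(\mu)\in\F$, hence literally a member of $\F_{\sigma\nu}$.
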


 \begin{proof} See\cite{Ochoa2016}.
\end{proof}

In the following, whenever we talk about a filter $\mathbb{F}$ in $X$ we will be considering a filter of open sets. For any pair of sections $\sigma$, $\mu$ with domains in a filter, we define $\sigma \sim_\mathbb{F} \mu$  if and only if $\rho_{\mathbb{F}}(\sigma, \mu)=0$. This is an equivalence relation, and the quotient space is therefore a metric space under $d_\mathbb{F}([\sigma], [\mu] )= \rho_\mathbb{F} (\sigma, \mu)$.  The quotient space provided with the metric $d_\F$ is the metric space associated with the filter $\mathbb{F}$. If $\F$ is principal and the topology of the base space X is given by a metric, then the associated metric space of that filter is complete. In fact completeness is a trivial consequence of the fact that sections are continuous and bounded in the case of a $\sigma$-complete filter (if $X$ is a metric space). However, principal filters are not interesting from the semantic point of view and $\sigma$-completeness might not hold for filters or even ultrafilters of open sets. The good news is that we can still guarantee completeness in certain kinds of ultrafilters.

\begin{theorem}\label{regspace}
  Let $\A$ be a sheaf of metric structures defined over a regular
  topological space $X$. Let $\F$ be an ultrafilter of regular open
  sets. Then, the induced metric structure in the quotient space
  $\A[\F]$ is complete under the induced metric.
\end{theorem}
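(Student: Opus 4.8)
The plan is to show metric completeness of $(\A[\F], d_{\F})$ directly, by taking an arbitrary $d_{\F}$-Cauchy sequence and producing a limit section whose domain lies in $\F$. First I would reduce to a rapidly Cauchy subsequence: since a Cauchy sequence converges as soon as one of its subsequences does, I may pass to representatives $\sigma_{n}$ (with $\dom(\sigma_{n}) \in \F$) satisfying $d_{\F}([\sigma_{n}],[\sigma_{n+1}]) < 2^{-(n+1)}$. Unwinding the definition $\rho_{\F}(\sigma_{n},\sigma_{n+1}) = \inf_{U \in \F_{\sigma_{n}\sigma_{n+1}}} \sup_{x \in U} d_{x}(\sigma_{n}(x),\sigma_{n+1}(x))$ as an infimum, and using that $\F$ is generated by regular open sets together with $\dom(\sigma_{n}),\dom(\sigma_{n+1}) \in \F$, I would select for each $n$ a regular open $V_{n} \in \F$ with $V_{n} \subseteq \dom(\sigma_{n}) \cap \dom(\sigma_{n+1})$ and $\sup_{x \in V_{n}} d_{x}(\sigma_{n}(x),\sigma_{n+1}(x)) < 2^{-n}$.

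The candidate limit is the fiberwise limit: wherever all $\sigma_{n}$ are defined and the values form a $d_{x}$-Cauchy sequence in the fiber, completeness of $(E_{x},d_{x})$ (bounded of diameter $1$ and complete by the definition of a metric sheaf) yields a unique limit point, and I define $\sigma(x)$ to be that point. The two things to arrange are that the domain of $\sigma$ is a member of $\F$ and that $\sigma$ is a genuine continuous section. Here I would use regularity of $X$ together with the regular open structure of $\F$: working inside the complete Boolean algebra of regular open sets, I would build a decreasing tower $G_{n} \in \F$ of regular open sets with $\overline{G_{n+1}} \subseteq G_{n} \subseteq V_{1} \cap \dots \cap V_{n}$, interposing the closures by appealing to regularity of $X$. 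On the regularized core $V := \interior\bigl(\bigcap_{n} G_{n}\bigr)$ every point lies in all $V_{k}$, so the telescoping estimate $d_{x}(\sigma_{n}(x),\sigma_{m}(x)) \le 2^{-n+1}$ holds uniformly for $m \ge n$; thus $(\sigma_{n})$ converges uniformly on $V$ and $\sup_{x \in V} d_{x}(\sigma_{n}(x),\sigma(x)) \le 2^{-n+1}$.

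For continuity I would invoke the continuity of the global distance function $d^{\A}$ and the fact that images of sections form a basis for the topology of $E$: uniform convergence of $\sigma_{n}$ to $\sigma$ in $d^{\A}$ over $V$ transfers continuity from the $\sigma_{n}$ to $\sigma$ and forces $\mathrm{Im}(\sigma)$ to be open, so that $\sigma$ is a section with $\dom(\sigma)=V \in \F$. Finally, $\rho_{\F}(\sigma_{n},\sigma) \le \sup_{x \in V} d_{x}(\sigma_{n}(x),\sigma(x)) \le 2^{-n+1} \to 0$, giving $[\sigma_{n}] \to [\sigma]$ in $\A[\F]$, and hence completeness.

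The hard part will be the middle step: guaranteeing that the limit lives on a set that is still in $\F$ while remaining a continuous section. Since an ultrafilter of open sets need not be countably complete, the naive intersection $\bigcap_{n} V_{n}$ need not belong to $\F$; it is precisely the regularity of $X$ and the regular open structure of $\F$ that let the nested-closure tower be run inside the complete Boolean algebra of regular open sets and its regularized core be captured by $\F$, while the continuity of $d^{\A}$ is what upgrades pointwise fiberwise limits to an honest section. Controlling these two requirements simultaneously is the crux of the argument.
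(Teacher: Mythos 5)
There is a genuine gap at the step you yourself flag as the crux: the claim that the ``regularized core'' $V=\interior\bigl(\bigcap_{n}G_{n}\bigr)$ of your nested tower still belongs to $\F$. Completeness of the Boolean algebra of regular open sets guarantees that the infimum of the decreasing sequence $G_{n}$ \emph{exists} in that algebra; it does not make an ultrafilter closed under countable infima. The paper warns about exactly this (``$\sigma$-completeness might not hold for filters or even ultrafilters of open sets''), and the ultrafilters it actually uses are counterexamples: in Section \ref{sec:physics} one takes $X=\mathbb{R}^{+}$ and $\F$ containing every $(0,1/n)$. For a Cauchy sequence in which $\sigma_{n}$ and $\sigma_{n+1}$ are $2^{-n}$-close only on subsets of $(0,1/n)$ --- and these are precisely the interesting sequences, e.g.\ the ones approaching a Dirac delta --- every admissible $V_{n}$ is forced inside $(0,1/n)$, so $\bigcap_{n}G_{n}\subseteq\bigcap_{n}(0,1/n)=\emptyset$ and your candidate domain is empty while the sequence still must have a limit in $\A[\F]$. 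No amount of interposing closures inside the algebra of regular open sets repairs this: the infimum of a decreasing sequence taken from an ultrafilter of regular open sets can be the zero element. So the strategy of finding a single $V\in\F$ on which the whole tail converges uniformly cannot work.

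The paper's route (Lemma \ref{cauchylimit}, proved in the cited proceedings) avoids this by never asking for one set in $\F$ that works for all $n$ simultaneously. One builds a limit \emph{function} $\mu_{\infty}$ on the common domain by patching: roughly, $\mu_{\infty}=\sigma_{n}$ on $G_{n}\setminus G_{n+1}$ and the fiberwise limit on $\bigcap_{n}G_{n}$. Then $\sup_{x\in G_{n}}d_{x}(\sigma_{n}(x),\mu_{\infty}(x))\leq 2^{-n+1}$, and since each \emph{individual} $G_{n}$ lies in $\F$, the definition of $\rho_{\F}$ as an infimum over the filter already yields $\rho_{\F}(\sigma_{n},\mu_{\infty})\to 0$; no countable intersection is needed. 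The price is that $\mu_{\infty}$ is glued from different sections on sets that are not open, so it need not be continuous --- and \emph{that} is where regularity of $X$ and the regular open structure enter, namely to recover an open subset on which the restriction of $\mu_{\infty}$ is a genuine section (part 2 of Lemma \ref{cauchylimit}). Your closing paragraph has the roles reversed: regularity is not what rescues the countable intersection, it is what rescues continuity of the patched limit.
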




In regards with the above theorem, it is worth noting the following
connection between Cauchy sequences of sections in the pseudometric
$\rho_\F$ and the regularity of the space $X$.

\begin{lemma}\label{cauchylimit}
Let $\F$ be a filter and $\{ \sigma_n\}$ be a Cauchy sequence of
sections according to the pseudometric $\rho_\F$ with all of them
defined in an open set $U$ in $\F$. Then
\begin{enumerate}
\item There exists a limit function $\mu_\infty$ not necessarily
  continuous defined on $U$ such that $\lim_{n \to \infty}
  \rho_\F(\sigma_n, \mu_\infty)=0$.
\item If $X$ is a regular topological space and
  $\interior(\rm{ran}(\mu_\infty))\neq \emptyset$, there exists an open
  set $V \subset U$, such that $\mu_\infty \upharpoonright V$ is
  continuous.
\end{enumerate}
\end{lemma}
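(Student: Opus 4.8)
The plan is to handle the two items separately: item~1 is an existence statement that should come from completeness of the individual fibres, while item~2 upgrades pointwise information to continuity using the topology of the sheaf space. First I would pass to a rapidly converging subsequence: since $\{\sigma_n\}$ is $\rho_\F$-Cauchy, I can choose $\{\sigma_{n_k}\}$ with $\rho_\F(\sigma_{n_k},\sigma_{n_{k+1}})<2^{-k}$. Because $\rho_\F$ is defined as an infimum over the filter, for each $k$ there is an open set $W_k\in\F$ with $W_k\subseteq U$ on which $\sup_{x\in W_k} d_x(\sigma_{n_k}(x),\sigma_{n_{k+1}}(x))<2^{-k}$. On the set $\bigcap_{j\ge k}W_j$ the values $\sigma_{n_j}(x)$ form a Cauchy sequence in the fibre $E_x$, which is complete by Definition~\ref{metricsheafdef}; I define $\mu_\infty(x)$ to be that fibrewise limit wherever it exists and extend $\mu_\infty$ to the rest of $U$ by setting it equal to $\sigma_{n_1}(x)$. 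By construction $\pi\circ\mu_\infty=\mathrm{id}_U$, so $\mu_\infty$ is a genuine, not necessarily continuous, choice of one point in each fibre over $U$.

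The substantive step is to show $\rho_\F(\sigma_{n_k},\mu_\infty)\to 0$; once this holds for the subsequence, the full sequence converges to the same limit by the triangle inequality and the Cauchy property. Telescoping gives $d_x(\sigma_{n_k}(x),\mu_\infty(x))\le 2^{-k+1}$ for every $x\in\bigcap_{j\ge k}W_j$, so on that set $\sigma_{n_k}$ is uniformly within $2^{-k+1}$ of $\mu_\infty$. Here lies the \emph{main obstacle}: the set $\bigcap_{j\ge k}W_j$ is a countable intersection and need not belong to $\F$, whereas the finite intersections $\bigcap_{j=k}^{k+p}W_j$ that do lie in $\F$ may still contain points where the tail $\{\sigma_{n_j}(x)\}$ escapes control, so the naive supremum over them need not be small. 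I would resolve this not by insisting on the nested intersections but by exploiting the infimum in $\rho_\F$ together with the continuity of the sections $\sigma_{n_k}$ and of the global distance function $d^\A$: for each tolerance I would select, rather than the full intersection, a filter set on which $\sigma_{n_k}$ and $\mu_\infty$ are uniformly close, shrinking it so as to discard the uncontrolled points. This interplay between the infimum defining $\rho_\F$ and the failure of countable intersectivity of $\F$ is exactly the delicate point, and it is the same phenomenon that later forces the regularity and ultrafilter hypotheses in Theorem~\ref{regspace}.

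For the second item I would use that the images of sections form a basis for the topology of $E$. If $\interior(\mathrm{ran}(\mu_\infty))\neq\emptyset$, then $\mathrm{ran}(\mu_\infty)$ contains a nonempty basic open set, that is, the image $\tau(O)$ of some continuous section $\tau$ defined on an open $O\subseteq X$. For $x\in O$ the point $\tau(x)$ lies in $\mathrm{ran}(\mu_\infty)$, so $\tau(x)=\mu_\infty(x')$ for some $x'\in U$; comparing fibres through $\pi$ gives $x=\pi(\tau(x))=\pi(\mu_\infty(x'))=x'$, whence $\mu_\infty(x)=\tau(x)$. Thus $\mu_\infty$ already agrees on the open set $O\subseteq U$ with the continuous section $\tau$, and invoking the regularity of $X$ I would pass to an open $V\subseteq U$ (for instance a regular open refinement, as needed for the surrounding completeness argument) on which the identification $\mu_\infty\upharpoonright V=\tau\upharpoonright V$ persists, so that $\mu_\infty\upharpoonright V$ is continuous. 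The genuine difficulty is concentrated in the first item; the second is essentially a matter of reading continuity off the sheaf topology.
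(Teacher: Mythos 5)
Your treatment of item~2 is correct and is the natural argument: a nonempty interior of $\mathrm{ran}(\mu_\infty)$ must contain a basic open set of $E$, i.e.\ the image of a section $\tau$ on some open $O$, and applying $\pi$ forces $\mu_\infty\upharpoonright O=\tau\upharpoonright O$. (The paper itself only cites \cite{Ochoa2016} for the proof, so there is no in-text argument to compare against line by line.)

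Item~1, however, has a genuine gap, and you have located it yourself without closing it. Your construction defines $\mu_\infty$ as the fibrewise limit on the countable intersection $\bigcap_{j\ge k}W_j$ and as $\sigma_{n_1}$ elsewhere; since that intersection need not belong to $\F$ (the filter is not $\sigma$-complete), no set of $\F$ witnesses $\rho_\F(\sigma_{n_k},\mu_\infty)\le 2^{-k+1}$, and the proposed repair --- ``shrinking a filter set so as to discard the uncontrolled points'' --- is not available: removing an arbitrary subset of a filter set does not in general leave you inside the filter, and continuity of the $\sigma_{n_k}$ gives no control over the tail at points outside the countable intersection. The missing idea is to modify $\mu_\infty$ \emph{at} the uncontrolled points rather than to excise them, which is legitimate precisely because $\mu_\infty$ is allowed to be discontinuous. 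Concretely: replace the $W_k$ by a decreasing chain $V_1\supseteq V_2\supseteq\cdots$ of sets of $\F$ contained in $U$ with $\sup_{x\in V_k}d_x(\sigma_{n_k}(x),\sigma_{n_{k+1}}(x))<2^{-k}$, and define $\mu_\infty(x)$ to be the fibrewise limit on $\bigcap_k V_k$, to be $\sigma_{n_{m+1}}(x)$ on $V_m\setminus V_{m+1}$, and to be $\sigma_{n_1}(x)$ on $U\setminus V_1$. Then for every $x\in V_k$ one has $d_x(\sigma_{n_k}(x),\mu_\infty(x))\le\sum_{j\ge k}2^{-j}=2^{-k+1}$ (by telescoping up to the last index $m$ for which $x\in V_m$, or to the limit), so the single filter set $V_k$ witnesses $\rho_\F(\sigma_{n_k},\mu_\infty)\le 2^{-k+1}$; the full sequence then converges by the triangle inequality for the pseudometric together with the Cauchy hypothesis. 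Without some such ``last controlled value'' device your argument does not go through as written.
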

\begin{proof} See\cite{Ochoa2016}.
\end{proof}

Before studying the semantics of the quotient space of a generic
filter, we define the relation $\Vdash_U$ of local forcing in an open
set $U$ for a sheaf of metric structures. The definition is intended
to make the following statements about local and point forcing valid
\[  \A \Vdash_U \phi(\sigma) < \varepsilon \iff \forall x \in U \; \A
  \Vdash_x \phi(\sigma)< \delta \mbox{ and } \]
\[
  \A \Vdash_U \phi(\sigma) > \delta \iff \forall x \in U \; \A
  \Vdash_x \phi(\sigma)> \varepsilon,\]
for some $\delta < \varepsilon$. This is possible as a consequence of the truth continuity lemma.

\begin{definition}[Local forcing for Metric Structures]
  Let $\A$ be a Sheaf of metric structures defined in $X$,
  $\varepsilon$ a positive real number, $U$ an open set in $X$, and
  $\sigma_1,\dots, \sigma_n$ sections defined in $U$. If $\phi$ is an
  $\mathcal{F}$- restricted formula the relations $\A \Vdash_U
  \phi(\sigma) < \varepsilon$ and $\A \Vdash_U \phi(\sigma) >
  \varepsilon$ are defined by the following statements

Atomic formulas
\begin{itemize}
\item $\A \Vdash_U d(\sigma_1, \sigma_2) < \varepsilon \iff \sup_{x \in U} d_x(\sigma_1(x), \sigma_2(x)) < \varepsilon$
\item $\A \Vdash_U R(\sigma_1,\dots, \sigma_n) < \varepsilon \iff \sup_{x \in U} R^{\A_x}(\sigma_1(x),\dots, \sigma_n(x)) < \varepsilon$
\item Similar to the previous two, with $>$ instead of $<$ and $\sup$
  replaced by $\inf$
\end{itemize}

Logical connectives
\begin{itemize}
\item $\A \Vdash_U \max(\phi, \psi) < \varepsilon \iff$ $\A \Vdash_V \phi < \varepsilon$ and $\A \Vdash_W \psi < \varepsilon$
\item $\A \Vdash_U \max(\phi, \psi) > \varepsilon \iff$ There exist open sets $V$ and $W$ such that $V \cup W = U$ and $ \A \Vdash_V \phi > \varepsilon$ and $\A \Vdash_W \psi > \varepsilon$
\item $\A \Vdash_U \min(\phi, \psi) < \varepsilon \iff $  There exist open sets $V$ and $W$ such that $V \cup W = U$ and  $\A \Vdash_V \phi < \varepsilon$ and $\A \Vdash_W \psi < \varepsilon$ 
\item $\A \Vdash_U \min(\phi, \psi) < \varepsilon \iff \A \Vdash_U \phi < \varepsilon$ and $\A \Vdash_U \psi < \varepsilon$
\item $\A \Vdash_U 1 \tst \psi < \varepsilon \iff \A \Vdash_U \psi> 1 \tst \varepsilon$
\item $\A \Vdash_U 1 \tst \psi > \varepsilon \iff \A \Vdash_U \psi< 1 \tst \varepsilon$
\item $\A \Vdash_U \phi \tst \psi < \varepsilon \iff $ One of the following holds\\
i) There exists $r \in (0,1)$ such that $\A \Vdash_U \phi < r $ and $\A \Vdash_U \psi > r$\\
\hspace{2.5cm} ii) For all $r \in (0,1)$, $\A \Vdash_U \phi < r$ if and only if $\A \Vdash_U \psi < r$\\
\hspace{2.5cm} iii) $\A \Vdash_U \phi < \varepsilon$ \\
\hspace{2.5cm} iv) There exists $r, q \in (0,1)$ such that\\
\hspace{4.0cm} $\A \Vdash_U \phi > r$ and $\A \Vdash_U \psi < r$ \\
\hspace{4.0cm} $\A \Vdash_U \phi < q+ \varepsilon$ and $\A \Vdash_U \psi > q$ \\
\hspace{4.0cm} and for all $\delta < \varepsilon$ and $\A \Vdash_U \phi > \delta$
\item $\A \Vdash_U \phi \tst \psi > \varepsilon \iff$ There exists $q>0$  such that $ \A \Vdash_U \psi < q$ and $\A \Vdash \phi > q + \varepsilon$
\end{itemize}

Quantifiers
\begin{itemize}
\item $\A \Vdash_U \inf_\sigma \phi(\sigma) < \varepsilon \iff $ there
  exist an open covering $\{U_i\}$ of $U$ and a family of section
  $\mu_i$ each one defined in $U_i$ such that $\A \Vdash_{U_i}
  \phi(\mu_i) < \varepsilon$  for all $i$
\item $\A \Vdash_U \inf_\sigma \phi(\sigma) > \epsilon \iff $   there
  exist $\varepsilon'$ such that $0< \varepsilon< \varepsilon'$  and
  an open covering $\{U_i\}$ of $U$ such that for every section
  $\mu_i$ defined in $U_i$ $\A \Vdash_{U_i} \phi(\mu_i) >
  \varepsilon'$
\item $\A \Vdash_U \sup_\sigma \phi(\sigma) < \varepsilon \iff $ there
  exist $\varepsilon'$  such that $0< \varepsilon'< \varepsilon$  and
  an open covering $\{U_i\}$ of $U$ such that for every section
  $\mu_i$ defined in $U_i$ $\A \Vdash_{U_i} \phi(\mu_i) <
  \varepsilon'$
\item $\A \Vdash_U \sup_\sigma \phi(\sigma) > \varepsilon \iff $ there
  exist an open covering $\{U_i\}$ of $U$ and a family of section
  $\mu_i$ each one defined in $U_i$ such that $\A \Vdash_{U_i}
  \phi(\mu_i) > \varepsilon$  for all $i$
\end{itemize}
\end{definition}

Observe that the definition of local forcing leads to the equivalences
\[ \A \Vdash _U \inf _\sigma (1 \tst \phi(\sigma)) > 1 \tst \varepsilon
 \iff \A \Vdash _U \sup _\sigma \phi(\sigma) < \varepsilon,\]
\[\A \Vdash _U \inf _\sigma (\phi(\sigma)) < \varepsilon  \iff \A
  \Vdash _U \sup _\sigma (1 \tst \phi(\sigma)) > 1 \tst \varepsilon.\]

 The fact that we can obtain a similar statement to the
 Maximum Principle of~\cite{CAIC} is even more important.

\begin{theorem}[The Maximum Principle for Metric structures]
If $\A \Vdash_U \inf_{\sigma} \phi(\sigma) < \varepsilon$ then there
exists a section $\mu$ defined in an open set $W$ dense in $U$ such
that $\A \Vdash_U \phi(\mu) < \varepsilon'$, for some $\varepsilon' <
\varepsilon$.
\end{theorem}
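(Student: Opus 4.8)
The plan is to peel off the outermost quantifier using its local-forcing clause and then to assemble the resulting local witnesses into a single section living on a dense open subset, in the spirit of Caicedo's Maximum Principle. First I would apply the definition of $\A \Vdash_U \inf_\sigma \phi(\sigma) < \varepsilon$ verbatim: it furnishes an open covering $\{U_i\}_{i \in I}$ of $U$ together with sections $\mu_i$, each defined on $U_i$, such that $\A \Vdash_{U_i} \phi(\mu_i) < \varepsilon$ for every $i$. The content of the theorem is then to glue these $\mu_i$ into one section $\mu$, at the unavoidable cost of shrinking $U$ to a dense open subset $W$.

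Next I would buy some room strictly below $\varepsilon$. Using the truth-continuity lemma for the forcing relation together with the intended equivalence between local and point forcing (namely, local forcing of $\phi < \varepsilon$ on an open set is witnessed by point forcing $\phi < \delta$ throughout that set for some $\delta < \varepsilon$), each witness $\mu_i$ already forces $\phi(\mu_i) < \delta_i$ pointwise on $U_i$ for some $\delta_i < \varepsilon$. Reading this back through the same equivalence gives $\A \Vdash_{V} \phi(\mu_i) < \varepsilon'$ on every open $V \subseteq U_i$ as soon as $\delta_i < \varepsilon' < \varepsilon$. This is the step where the target bound $\varepsilon' < \varepsilon$ is fixed, and it is exactly here that the chief difficulty sits.

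Then comes the gluing, which I would run through Zorn's lemma on the partially ordered set, ordered by inclusion, of families $\{(V_j,\ \mu_{i(j)}\!\upharpoonright V_j)\}$ of pairwise disjoint open sets $V_j$, each contained in some $U_{i(j)}$ and satisfying $\A \Vdash_{V_j} \phi(\mu_{i(j)}) < \varepsilon'$. A maximal such family has dense union $W = \bigcup_j V_j$: if some nonempty open $O \subseteq U$ missed $\overline{W}$, then $O$ would meet some $U_i$, and $O \cap U_i$, being an open subset of $U_i$ on which $\mu_i$ forces $\phi < \varepsilon'$, could be adjoined, contradicting maximality. I then define $\mu$ on $W$ piecewise by $\mu\!\upharpoonright V_j = \mu_{i(j)}\!\upharpoonright V_j$; since the $V_j$ are disjoint open sets, each piece is a continuous section, and images of sections form a basis for the topology of $E$, the glued map is again a section on $W$. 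Because local forcing is inherited by open subsets and the $V_j$ are disjoint, I would conclude $\A \Vdash_W \phi(\mu) < \varepsilon'$, which is the asserted conclusion (the forcing in the statement being understood on the domain $W$ of $\mu$).

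The hard part will be the \emph{uniform} choice of $\varepsilon' < \varepsilon$: the slack $\delta_i$ supplied by each witness depends on the piece $U_i$, and for an infinite covering $\sup_i \delta_i$ may reach $\varepsilon$, so a single $\varepsilon'$ need not be available across all of $W$ at once. I expect the argument to dispose of this either by extracting, with the help of the truth-continuity lemma and the regularity of $X$ where available, a refinement on which the slack is uniform on each piece of the maximal disjoint family, or by organizing the Zorn construction around the open sets $G_r = \{x \in U : \exists i,\ x \in U_i \text{ and } \A \Vdash_x \phi(\mu_i) < r\}$ for $r < \varepsilon$ and exploiting their openness to keep $W$ dense while keeping the bound under control. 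The remaining point, that the piecewise-defined $\mu$ is a genuine continuous section rather than a merely fiberwise-defined map, is the second and more routine thing to verify.
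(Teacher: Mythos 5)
Your skeleton is the right one and, as far as one can tell, the intended one: this paper does not actually print a proof of the Maximum Principle (it defers, like the other lemmas of Section~\ref{sec:modth}, to the WOLLIC proceedings version), but the argument there is the metric adaptation of Caicedo's classical Maximum Principle, which is exactly your plan --- unfold the clause for $\A \Vdash_U \inf_\sigma \phi(\sigma) < \varepsilon$ into a covering $\{U_i\}$ with witnesses $\mu_i$, take a maximal pairwise disjoint family of open subsets on which some restricted witness works, observe that maximality forces the union $W$ to be dense in $U$, and glue (the gluing really is routine, as you say: $W$ is a disjoint union of open sets, continuity is local, and images of sections are a basis, so the glued map is a section; also, the $\Vdash_U$ in the conclusion must be read as $\Vdash_W$, since $\mu$ lives only on $W$).

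However, the difficulty you flag at the end is not a technicality that the argument can be expected to ``dispose of'': it is a genuine obstruction, and neither of your two proposed repairs can close it. Take $X=U=(0,1)$ and a sheaf in which the unary predicate $R$ takes the constant value $\varepsilon(1-x)$ on the fiber $E_x$. The covering $U_i=(1/i,1)$, with any sections $\mu_i$, witnesses $\A\Vdash_U\inf_\sigma R(\sigma)<\varepsilon$ according to the printed definition, yet for any open $W$ dense in $(0,1)$ and any section $\mu$ on $W$ one has $\sup_{x\in W}R_x(\mu(x))=\varepsilon$, so $\A\Vdash_W R(\mu)<\varepsilon'$ fails for every $\varepsilon'<\varepsilon$ (indeed even for $\varepsilon'=\varepsilon$). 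The slacks $\delta_i$ really can accumulate at $\varepsilon$, and no refinement, appeal to regularity, or reorganization of the Zorn construction around your sets $G_r$ can manufacture a uniform bound that the sheaf simply does not possess; note that the clause for $\Vdash_U\inf_\sigma$ (unlike the one for $\Vdash_U\sup_\sigma$) builds in no uniform $\varepsilon'$ over the covering. What your construction does prove is the pointwise statement: there is a section $\mu$ on a dense open $W=\bigcup_j V_j$ with $\A\Vdash_{V_j}\phi(\mu)<\varepsilon$ on each piece, hence $\A\Vdash_x\phi(\mu)<\varepsilon$ for every $x\in W$. To get the uniform conclusion as stated you must either strengthen the hypothesis (a finite subcovering, or a uniform slack written into the definition of $\A\Vdash_U\inf_\sigma\phi<\varepsilon$) or weaken the conclusion to this pointwise form, and your write-up should say which amendment it adopts rather than leave the choice open.
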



\subsection[Metric Generic Model Theorem]{The Metric Generic Model and its theory}\label{secgenmodel}

 In certain cases, the quotient space of the metric sheaf can be the
 universe of a metric structure in the same language as each of the
 fibers. We examine in this section one such case: sheaves of metric
 structures over \emph{regular} topological spaces, and a generic for
 filter of \emph{regular} open sets. We do not claim that this is the
 optimal situation - however, we provide a proof of a version of a
 Generic Model Theorem for these Metric Generic models.

\begin{definition}[Metric Generic Model]
Let $\A=(X,p,E)$ be a sheaf of metric structures defined on a regular
topological space $X$ and $\F$ an ultrafilter of regular open sets in
the topology of $X$. We define the Metric Generic Model $\A[\F]$ by
\begin{equation}
\A [\F]=\{ [\sigma]/ _{\sim_\F} | \dom(\sigma) \in \F\},
\end{equation}
provided with the metric $ d_\F $ defined above (see Lemma
\ref{lemmapseudometric} and subsequent discussion), and with
\begin{itemize}
\item 
\begin{equation}
f^{\A [\F]}([\sigma_1]/ _{\sim_\F}, \dots, [\sigma_n]/
_{\sim_\F})=[f^{\A}(\sigma_1, \dots, \sigma_n)]/_{\sim_\F}
\end{equation}
with modulus of uniform continuity $\Delta_{f}^{\A[\F]}=\inf_{x \in X}
\Delta_{f}^{\A_x}$.
\item
\begin{equation}
 R^{\A [\F]}([\sigma_1]/ _{\sim_\F}, \dots,
 [\sigma_n]/_{\sim_\F})=\inf_{U \in \F_{\sigma_1 \dots \sigma_n}}\:
 \sup_{x \in U} R_x(\sigma_1(x), \dots, \sigma_n(x))
\end{equation}
with modulus of uniform continuity $\Delta_{R}^{\A[\F]}=\inf_{x \in X}
\Delta_{R}^{\A_x}$.
\item 
\begin{equation}
c^{\A [\F]}=[c]/_{\sim_\F}
\end{equation}
\end{itemize}
\end{definition}

 Observe that the properties of $d_\F$ and the fact that $R^{\A}$ is
 continuous ensure that the Metric Generic Model is well defined as a
 metric structure. An important observation, also developed in full
 detail in\cite{Ochoa2016} is that in our setting, $R^{\A[\F]}$ is
 indeed uniformly continuous.

 It is worth mentioning that part of the ``generality'' of
 the so called generic model is lost. This is indeed true and it is a
 consequence of the additional conditions that we have imposed on the
 topology of the base space (regularity) and on the ultrafilter to
 obtain a Cauchy complete metric space.

 We can now present the Generic Model Theorem (GMT) for metric
 structures. This provides a nice way to describe the theory of the
 metric generic model by means of the forcing relation and topological
 properties of the sheaf of metric structures. 

\begin{theorem}[Metric Generic Model Theorem]\label{genmodtheometric}
Let $\F$ be an ultrafilter of regular open sets on a regular
topological space $X$ and $\A$ a sheaf of metric structures on
$X$. Then
\begin{itemize}
\item
\begin{equation}
  \A[\F] \models \phi([\sigma]/_{\sim \F})< \varepsilon \iff \exists U
  \in \F \mbox{ such that } \A \Vdash_U \phi(\sigma)< \varepsilon
\end{equation}
\item
  \begin{equation}
     \A[\F] \models \phi([\sigma]/_{\sim \F})> \varepsilon \iff
     \exists U \in \F\mbox{ such that }\A \Vdash_U \phi(\sigma)>
     \varepsilon
  \end{equation}
\end{itemize}
\end{theorem}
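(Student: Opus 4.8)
The plan is to prove both equivalences \emph{simultaneously} by induction on the complexity of the $\mathcal{F}$-restricted formula $\phi$. A simultaneous induction is forced on us because the connective and quantifier clauses interchange the roles of $<$ and $>$: for instance $1 \tst \phi < \varepsilon$ is governed by the condition $\phi > 1 - \varepsilon$, and $1\tst\phi > \varepsilon$ by $\phi < 1-\varepsilon$. Throughout I will use that $\F$ is closed under finite intersection to merge finitely many witnessing open sets into a single element of $\F$, and I will take for granted that the moduli conditions (f)--(g) of Definition \ref{metricsheafdef} make $\A[\F]$ a genuine metric structure, so that the left-hand relation $\A[\F] \models \phi < \varepsilon$ is meaningful.

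For the atomic base case the equivalence is immediate from the definitions. By definition $R^{\A[\F]}([\sigma_1],\dots,[\sigma_n]) = \inf_{U \in \F} \sup_{x \in U} R_x(\sigma_1(x),\dots,\sigma_n(x))$, whereas the clause $\A \Vdash_U R(\sigma_1,\dots,\sigma_n) < \varepsilon$ asserts exactly $\sup_{x\in U} R_x(\dots) < \varepsilon$; an infimum indexed by $\F$ is $<\varepsilon$ precisely when one of its terms is $<\varepsilon$, which is the desired biconditional. The distance predicate is treated identically, using $d_\F = \rho_\F$ from Lemma \ref{lemmapseudometric}, and the $>\varepsilon$ atomic clauses are the mirror image, with $\inf$ and $\sup$ exchanged. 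For the connectives I would feed the induction hypothesis into the metric semantics of $\max$, $\min$, $1\tst\cdot$ and $\phi \tst \psi$ and match it against the corresponding local-forcing clause: e.g. $\A[\F]\models \max(\phi,\psi) < \varepsilon$ forces both conjuncts, the hypothesis yields $U_1,U_2 \in \F$, and $U_1 \cap U_2 \in \F$ handles the conjunction; the $1\tst\cdot$ clause converts a $<$ condition into a $>$ condition and invokes the other half of the inductive statement. The genuinely fiddly connective is the truncated subtraction $\phi \tst \psi$, whose point- and local-forcing clauses carry a threshold parameter $r$ with $\A_x \models \psi = r$; here I would track $r$ across fibers, using truth continuity and the filter to localize to an open set on which the case distinction (i)--(iv) is uniform.

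The quantifier step is the heart of the argument. It splits into an \emph{existential-flavored} pair ($\inf_\sigma\phi<\varepsilon$ and $\sup_\sigma\phi>\varepsilon$) and a \emph{universal-flavored} pair ($\inf_\sigma\phi>\varepsilon$ and $\sup_\sigma\phi<\varepsilon$). The existential-flavored cases are the cleaner ones. Left to right, $\A[\F]\models \inf_\sigma\phi < \varepsilon$ produces a class $[\mu]$ with $\phi^{\A[\F]}([\mu]) < \varepsilon$, and the induction hypothesis returns a $V \in \F$ with $\A \Vdash_V \phi(\mu) < \varepsilon$, so that the one-set cover $\{V\}$ witnesses $\A \Vdash_V \inf_\sigma\phi < \varepsilon$. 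Right to left is precisely where the Maximum Principle for Metric Structures enters: it converts $\A \Vdash_U \inf_\sigma\phi < \varepsilon$ into an honest section $\mu$ defined on a dense open $W \subseteq U$ with $\A \Vdash_U \phi(\mu) < \varepsilon'$ for some $\varepsilon' < \varepsilon$. To run the induction hypothesis on $\mu$ I must know its class lives in $\A[\F]$; this I would obtain from the regularity of $X$ together with the computation $\interior(\overline{W}) = U$, valid because $W$ is dense in the regular open set $U \in \F$, so that the domain of $\mu$ is, up to regularization, an element of $\F$.

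I expect the universal-flavored pair to be the main obstacle. There the semantic side quantifies over \emph{all} elements $[\mu]$ of $\A[\F]$, i.e.\ over sections with domain in $\F$, while the forcing side quantifies over \emph{all} local sections defined on the pieces of a covering of $U$; closing this gap is exactly what the ultrafilter of regular open sets is for. The direction from forcing to satisfaction is manageable: from $\A\Vdash_U \inf_\sigma\phi > \varepsilon$ I extract a covering $\{U_i\}$ and an $\varepsilon' > \varepsilon$ bounding all local sections below, then for an arbitrary class $[\mu]$ I restrict $\mu$ to the pieces, glue the forced inequalities over the covering, and apply the induction hypothesis to conclude $\phi^{\A[\F]}([\mu]) \ge \varepsilon' > \varepsilon$. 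The reverse direction I would argue by contraposition: assuming no $U \in \F$ forces $\inf_\sigma\phi > \varepsilon$, the failure of the covering condition together with the ultrafilter property lets me patch, fiber by fiber, a single section $\mu$ with $\dom(\mu) \in \F$ witnessing $\phi^{\A[\F]}([\mu]) \le \varepsilon$, contradicting the infimum being $>\varepsilon$. Guaranteeing that this patched object is an \emph{actual} section and that its class genuinely lands in $\A[\F]$ is the delicate point, and it is where Lemma \ref{cauchylimit}, the regularity of $X$, and the uniform lower bounds on the moduli $\Delta^{\A}_{R}$ and $\Delta^{\A}_{f}$ all do their work. The $\sup_\sigma\phi<\varepsilon$ case is dual and follows by applying the $\inf$ argument to $1 \tst \phi$ via the equivalences recorded after the local-forcing definition.
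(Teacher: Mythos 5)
The paper does not actually contain a proof of Theorem~\ref{genmodtheometric}: it explicitly defers the proof of the generic model theorem to the conference proceedings~\cite{Ochoa2016}, so there is no in-text argument to compare against line by line. That said, your plan is the evidently intended one, since it uses exactly the ingredients the paper assembles for this purpose: simultaneous induction on $\mathcal{F}$-restricted formulas over the paired conditions $\phi<\varepsilon$ and $\phi>\varepsilon$, closure of $\F$ under finite intersections for the connectives, the truth continuity lemma, the Maximum Principle for the existential-flavored quantifier clauses, and the ultrafilter of regular open sets together with regularity of $X$ for the universal-flavored ones. I see no wrong turn in the outline.

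Two places where your sketch is thinner than the argument will need to be. First, the atomic case for $>$ is not merely ``the mirror image with $\inf$ and $\sup$ exchanged'': the right-to-left direction is easy (a single $U$ with $\inf_{x\in U}R_x>\varepsilon$ bounds every $\sup_{x\in V}R_x$ for $V\in\F$ from below via $V\cap U$), but the left-to-right direction --- from $\inf_{U\in\F}\sup_{x\in U}R_x>\varepsilon$ to the existence of a \emph{single} $U\in\F$ on which $\inf_{x\in U}R_x>\varepsilon$ --- genuinely needs truth continuity plus the ultrafilter property on regular open sets (the open set where $R_x<\varepsilon+\delta$ either has its regularization in $\F$ or does not, and the two cases must be played off against each other). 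Second, in the existential step the Maximum Principle only hands you a section $\mu$ on a dense open $W\subseteq U$; $W$ need not itself belong to $\F$, and your appeal to $\interior(\overline{W})\supseteq U$ is the right instinct but you must still say why $[\mu]$ is a legitimate element of $\A[\F]$ (extend $\mu$, or pass to the regularization and use that $\F$ consists of regular open sets). Neither point breaks the proof; both are exactly where the hypotheses of regularity and of an ultrafilter of \emph{regular} open sets earn their keep, as you correctly anticipate.
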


We now stress that the Metric Generic Model Theorem (GMT) has distinct
but strong connections with the Classical Theorem (see \cite{CAIC,
  FORE}).  In the case of the Metric GMT, we can observe similarities
in the forcing definitions if we consider the parallelism between the
minimum function and the disjunction, the maximum function  and the
conjunction, the infimum and the existential quantifier. On the other
hand, differences are evident if we compare the supremum with the
universal quantifier. The reason for this is that in this case the
sentence $1 \tst (1 \tst \phi)$, which is our analog for the double
negation in continuous logic, is equivalent to the sentence
$\phi$. Note that the point and local forcing definitions are
consistent with this fact - i.e.,
\[ \A \Vdash_U 1 \tst (1 \tst \phi)< \varepsilon \iff \A \Vdash_U \phi <
\varepsilon ,\]
\[ \A \Vdash_U 1 \tst (1 \tst \phi)> \varepsilon \iff \A \Vdash_U \phi >
\varepsilon .\]
As another consequence, the metric version of the GMT does not require
an analog definition to the G\" odel translation. 

 We close this section by introducing a simple example that
 illustrates some of the elements just described. We study the metric
 sheaf for the continuous cyclic flow in a torus.

 Let $X= S^1$, $E= S^1 \times S^1$ and $p = \pi _1$, be the projection
 function onto the first component. Then, we have $E_q= S^1$. Given a
 set of local coordinates $x_i$ in $S_i$ and a smooth vector field $V$
 on $E$, such that
 \[  V= V_1\frac{\partial }{\partial x_1}+V_2\frac{\partial}{\partial
     x_2}\]
\[ V_1(p) \neq 0 \;\; \forall p \in S^1,\]
 we can take as the set of sections the family of integrable curves of
 $V$. The open sets of the sheaf can be described as local streams
 through $E$. Complex multiplication in every fiber is continuously
 extended to a function between integral curves. Every section can be
 extended to a global section.

  Let us study the metric generic model of this sheaf. Note that $X$
  is a topological regular space and that it admits an ultrafilter
  $\F$ of regular open sets. First, observe that $\A[\F]$ is a proper
  subset of the set of local integrable curves. In fact, every element
  in $\A [\F]$ can be described as the equivalence class of a global
  section in $E$:
For any element $[\sigma]\in  \A[\F]$, $U = dom (\sigma) \in \F$, and
there exists a global integral curve $\mu$ in $E$ such that
$\rho_\F(\sigma, \mu)=0$. This result leads to the conclusion that
every ultrafilter filter of open sets in $S^1$ generates the same
universe for $\A[\F]$. 
  Observe that every fiber can be made into a metric structure with a
  metric given by the length of the shortest path joining two
  points. This, of course, is a Cauchy complete and bounded metric
  space. Dividing the distance function by $\pi$, we may redefine this
  to make $d(x,y)\leq 1$, for $x$ and $y$ in $S^1$. Therefore, this
  manifold is also a metric sheaf. In addition, observe that complex
  multiplication in $S^1$ extends to the sheaf as a uniformly
  continuous function in the set of sections.
  For any element $[\sigma] \in \A [\F]$, let $U= dom (\sigma )\in \F$
  and $\mu$ be the global integral curve that extends $\sigma$. Thus,
  for arbitrary $\varepsilon >0$
\begin{equation}
  \A \Vdash _U d^\A(\sigma, \mu) < \varepsilon
\end{equation}
and as a consequence
\[ \A[\F] \models d^{\A[\F]}([\sigma],
  [\mu]) = 0. \]
 In addition, the metric generic model satisfies the condition that
 {\sl multiplication} between sections is left continuous. Let
 $\eta$ and $\mu$ be sections whose domain is an element of the
 ultrafilter. For any $\varepsilon < 1/2$, if 
\[
\A \Vdash _{\dom(\eta)\cap \dom(\mu)} d(\eta, \mu) < \varepsilon
\]
then for any other section $\sigma$ defined in an element of $\F$, it
is true that in $V=\dom(\eta)\cap \dom(\mu)\cap \dom(\sigma)$ 
\begin{equation}
\A \Vdash _V  d(\eta \sigma, \mu \sigma) < \varepsilon
\end{equation}
and also
\begin{equation}
\A \Vdash _V  1 \tst \max (d(\eta, \mu), 1 \tst d(\eta \sigma, \mu
\sigma)) < \varepsilon.
 \end{equation}
By the metric GMT, we can conclude that
\[
\A[\F]\models 1 \tst \max (d^{\A[\F]}([\eta], [\mu]), 1 \tst d([\eta]
[\sigma], [\mu] [\sigma])) < \varepsilon
\]
and since $\sigma, \eta$ and $\mu$ were chosen arbitrarily.
\[
\A[\F]\models \sup_\sigma \sup_\eta \sup_\mu \bigl[ 1 \tst \max
(d^{\A[\F]}([\eta], [\mu]), 1 \tst d([\eta] [\sigma], [\mu]
[\sigma]))\bigr] < \varepsilon .
\]
Right continuity, left invariance and right invariance of this metric
can be expressed in the same way.

\section{Metric Sheaf for an infinite-dimensional projective Hilbert space}\label{sec:discrete}

 In this section we construct a sheaf model for a projective Hilbert space when the Hilbert Space has a countable orthonormal basis, with a self-adjoint operator whose domain is the full space. Infinite dimensional projective spaces are realized as the direct limit of finite dimensional ones. Here, we present such a construction in the context of model theory for metric structures\cite{Ochoa2016,HENS}.

 We define the sheaf for the lattice of finite subspaces of a Hilbert space as follows.

We fix a Hilbert space $H$, a self-adjoint bounded operator $A$ defined on $H$ and a maximal set $\{x_k | k \in K \}$ of pairwise orthogonal eigenvectors of $A$.
\begin{enumerate}
\item ({\sl Base space $X$}) Let $I \subset K$ be a finite index set and define
$l_I=\{x_i | i  \in I\}$. Also, let $X=\re=\{l_I | I \subset K \text{ and } |I|< \aleph_0 \}$ be the lattice of finite subsets of the fixed set of eigenvectors. We partially order $X$ by
\[
l_I \prec l_J \mbox{ iff } I\subset J.
\]
The topology on $X$ is generated by the 
basic open sets
\[[l)=\{l' \in \re | l \prec l' \}.\]
\item ({\sl Fibers of the Sheaf}) Every fiber is a two-sorted topological structure 
\begin{equation*}
E_I=\left( PV_I, J_I;E_{A_I}, A_I, (E_{K_\alpha})_\alpha, (K_\alpha)_\alpha, P \right) 
\end{equation*} 
where  $\alpha$ ranges over an index set $\Lambda$.
\begin{enumerate} 
\item $PV_I$ is the finite-dimensional complex projective space associated to $l_I$, constructed as follows. First, for $l_I \in \re$ define on the vector space $ V_I$ spanned by $l_I$ in the complex field the equivalence relation $\sim$ by
\begin{equation*}
y \sim x \iff \exists c \in \co \setminus \{0\} \text{ such that } y = cx,
\end{equation*}
 for $x$ and $y$ different to $0$. Let $[y]$ be the equivalence class with representative element $y$ and   
\begin{equation*}
PV_I= \{ [y] | y \in V_I\setminus \{0\}  \}.
\end{equation*}
Thus, $PV_I$ is the complex projective space of $V_I$. The second sort is the closed interval $J_I$ corresponding to its numerical range, i.e., $J_I= \{\bra x , A_I x \ket : ||x||=1 \}$ with the standard metric of the real set.

\item We provide $PV_I$ with the Fubini-Study metric. This is a K\"ahler metric given by
\[
d([x], [y])= \arccos \sqrt{\frac{\bra x, y \ket \bra y, x \ket }{\bra x, x \ket \bra y, y \ket}},
\]
where $\bra x, y \ket$ is the inner product in the Hilbert space. This corresponds to the length of the geodesic in the finite dimensional sphere in $V_I$ connecting $y/ ||y||$ and $x/ ||x||$.
\item The complex projective sort has symbols $A_I$, $E_{A_I}$ and $P$ to be interpreted as follows.  First, notice that $\tilde A_I = A \upharpoonright V_I$ is a well defined linear operator in $V_I$. Then in $PV_I$, we interpret the expected value function $E_{A_I}$ for $A_I$ as 
  \begin{equation*}
   E_{A_I}([x]) = \frac{\bra \tilde A_I x, x \ket}{\bra x, x \ket}.
  \end{equation*}
 This is a function from the projective sort to the complex numbers. $A_I$ is the function symbol from the projective sort to itself associated with the operator $\tilde A_I$ and defined by 
\begin{equation*}
  A_I[x]= [ \tilde A_I x ].
\end{equation*}
 $P$ is a binary function symbol from the projective sort to the complex numbers, interpreted as follows
\begin{equation*}
P([x],[y])=\frac{\bra x , y \ket \bra y, x \ket}{\bra x, x\ket \bra y, y \ket}.
\end{equation*}
 This can be regarded as the square of the projection of an equivalence class $[x]$ into the equivalence class $[y]$. We have also included into the language additional symbols $K_\alpha$ and  $E_{K_\alpha}$ to be interpreted in a similar way than $A_I$ and $E_{A_I}$ for a linear operator $K$ with trivial kernel defined in $V_I$. 
\end{enumerate}

\item ({\sl Sheaf topology })  If a metric sheaf is such that every fiber is a multisorted structure, the topology for every sort should be given by sections in such a way that every function from one sort to another is continuous. Thus, we must extend the definition for a sheaf in Ref. \citenum{Ochoa2016} to include the following statement. 

Let $S_{x 1}, \dots, S_{x n}$ be sorts of a model $E_x$, then the function $f^{\A}_j= \cup_x f^x_j: \cup_x S_{x n_1}\times \dots \times S_{x n_k} \to \cup_x S_{x m}$ must be continuous.

Thus, consider again our sheaf and let $[l_I )$ be a basic open set in $\re$. We define for each $[x]$ in $PV_I$ the function
\begin{align}\label{secciones}
\sigma_{x} : & [l_I) \to \bigsqcup_{I \subset J} PV_J \\
\sigma_{x} &(l_J)=[x]
\end{align}
and for each $c = (c_1, \dots, c_{n_I})$ in $\mathbb{C}^{n_I}$ 
\begin{align}\label{seccionest}
\mu_{c} : & [l_I) \to \bigsqcup_{I \subset J} J_I\\
\mu_{c} &(l_K)= \frac{\sum |c_i|^2 \lambda_i^I }{\sum|c_i|^2}
\end{align}
where $l_K \in [l_I)$ and  $\{\lambda_i^I \}$ is the set of real eigenvalues $A_I$. These are the sections of our sheaf.
\end{enumerate}
From the set of sections just defined it is clear that there are no global sections in the projective sort. Before showing that the sheaf is well defined, we need to stress that it is not possible to define an inner product in $PV_I$ as a function of the inner product in $V_I$ only. However, in the absence of an inner product we choose $P$ as a geometric descriptor for the projective sort. We say that $[x], [y]$ are orthogonal if $P([x],[y]) < \varepsilon$ for every $\varepsilon \in (0,1)$. In this case they are also orthonormal. From the definition for the metric,  we see that $P([x],[y])$ can be regarded as an angle between two elements in  $PV_I$.

 To show that the topological sheaf is well defined observe that $(A^{\A })^{-1}(\sigma_x)$ is a section for every $\sigma_x$ by analyzing the inverse pointwise. To see that $E_A^\A$ is continuous, for every $c \in \co$ define 
\begin{equation*}
S_c(I)= \left\{[x] | c=\frac{\bra A_I^*x, x \ket}{\bra x, x \ket} \right\},
\end{equation*}
observe that $E_A^{\A\; -1}(\mu_c)= \cup_I S_c(I)$ and that this set is a union of sections. Finally define 
\begin{equation*}
  S'_c(I) =\left\{([x],[y])| c= \frac{\bra x , y \ket \bra y, x \ket}{\bra x, x\ket \bra y, y \ket} \right\},
\end{equation*}
and observe that $P^{\A -1}(\mu_c)=\cup_I S'_c(I)$ is open in $\bigcup_{[l_I)}PV_I^2$. Modulus of uniform continuity can be equally defined.


 We claim that the generic model is an appropriate projective model for the description of many quantum mechanical systems. The structure of the generic model is not described by an application of the Metric Generic Model since the base space is not a regular topological space. Nevertheless, we can take the completion of the generic model and extend the interpretation of functions and relations accordingly.

\begin{theorem}
Let $\A[\F]$ be the metric generic model associated with the sheaf defined above. The following are statements satisfied by this model.
\begin{enumerate}
\item \label{infinitetop} There is an infinite number of orthonormal elements.
\item The function $A^{\A[\F]}$ has an infinite number of eigenvalues.
\item $A$ is not a bounded operator.
\item  $A^{\A [\F]}$ is continuous.   
\end{enumerate}
\end{theorem}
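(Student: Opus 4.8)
The plan is to realise every basis eigenvector inside $\A[\F]$ by a constant section and then to read off the four assertions from fibrewise computations. The enabling observation is topological: each basic open set $[l_I)$ is \emph{dense} in $X$, because $[l_I)\cap[l_{I'})=[l_{I\cup I'})\neq\emptyset$ for every $I'$, so $\interior(X\setminus[l_I))=\emptyset$. A maximal filter of open sets cannot omit a dense open set---adjoining it would keep the filter proper, by density---so every ultrafilter $\F$ of open sets contains all the $[l_I)$. Consequently each section $\sigma_{x_k}$, whose domain $[l_{\{k\}})$ is dense, has $\dom(\sigma_{x_k})\in\F$, and therefore $[\sigma_{x_k}]\in\A[\F]$; since $K$ is infinite this already places infinitely many elements in the generic model.

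For assertion~(1) I would fix $k\neq k'$ and compute in any fiber $E_J$ with $\{k,k'\}\subseteq J$: orthogonality of the eigenvectors gives $P^{E_J}([x_k],[x_{k'}])=\frac{\bra x_k,x_{k'}\ket\bra x_{k'},x_k\ket}{\bra x_k,x_k\ket\bra x_{k'},x_{k'}\ket}=0$ and Fubini--Study distance $\arccos\sqrt{0}>0$, both constant over $[l_{\{k,k'\}})\in\F$. Hence the inf--sup formulas defining the interpretations in $\A[\F]$ return $P^{\A[\F]}([\sigma_{x_k}],[\sigma_{x_{k'}}])=0$ and $d_\F([\sigma_{x_k}],[\sigma_{x_{k'}}])>0$, so the $[\sigma_{x_k}]$ are distinct and pairwise orthonormal, giving the infinite orthonormal set. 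For assertion~(2), in projective coordinates $A_J[x_k]=[\tilde A_J x_k]=[\lambda_k x_k]=[x_k]$, whence $A^{\A}\sigma_{x_k}=\sigma_{x_k}$ and each $[\sigma_{x_k}]$ is a fixed point---an eigenstate---of $A^{\A[\F]}$, with $E_{A}^{\A[\F]}([\sigma_{x_k}])=\lambda_k$. Whenever $A$ carries infinitely many distinct eigenvalues (the case of interest, e.g.\ an oscillator Hamiltonian), these furnish infinitely many eigenvalues of $A^{\A[\F]}$.

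Assertions~(3) and~(4) should be proved together, since their tension is the real content. For~(4) I would not argue in the Fubini--Study metric but transport the continuity already built into the sheaf: the text checks that $(A^{\A})^{-1}(\sigma_x)$ is a union of sections, i.e.\ $A^{\A}$ is continuous for the sheaf topology on $\bigsqcup_J PV_J$; passing to the quotient, $A^{\A[\F]}([\sigma])=[A^{\A}\sigma]$ is well defined and continuous, and this interpretation is then extended to the completion of $\A[\F]$ (needed because $X$ is not regular, so the Metric Generic Model Theorem does not apply directly). For~(3), ``bounded'' is to be read operator-theoretically, in contrast with the topological continuity of~(4): I would exhibit the failure of a uniform modulus of continuity. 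Near $[x_j]$ a perturbation in the direction $x_m$ is displaced by $A$ through projective distance proportional to $|\lambda_m/\lambda_j|$, and as $I$ grows these ratios---equivalently the numerical ranges $J_I$, which exhaust the spectrum---are not uniformly bounded, so $\inf_I\Delta_{A_I}(\varepsilon)=0$ and no finite operator bound exists. The main obstacle is precisely this reconciliation: one must keep the \emph{topological} continuity of $A^{\A}$ on the sheaf (which survives to $\A[\F]$) separate from \emph{uniform} continuity and operator boundedness in the fibre metric (which fails once the eigenvalue ratios are unbounded), and route both through the completed generic model; pinning down the exact sense of ``bounded'' that makes (3) and (4) simultaneously true is the delicate step.
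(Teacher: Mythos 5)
Your argument for statement (1) is essentially the paper's own proof: the paper likewise reduces to showing that for every $n$ there is some $[l_I)\in\F$ whose fiber contains $n$ mutually orthogonal elements, and then forces $\max\bigl(P([\sigma_i],[\sigma_j])\mid i,j\in n\bigr)<\varepsilon$ there. What you add, and what the paper leaves entirely implicit, is the reason such sets belong to $\F$ at all: your observation that $[l_I)\cap[l_{I'})=[l_{I\cup I'})\neq\emptyset$ makes every basic open dense, so a maximal filter of open sets must contain all of them. That lemma is correct and is genuinely needed for the paper's ``we may find an open set $[l_I)$ in $\F$'' to be legitimate, so on this point your write-up is more complete than the original.

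For statements (2)--(4) the paper gives no proof (``the other cases are similar''), so your sketches are additions rather than alternatives, and they contain gaps worth naming. In (2), the identity $A_J[x_k]=[\lambda_k x_k]=[x_k]$ presupposes $\lambda_k\neq 0$, since otherwise $\lambda_k x_k\notin V_I\setminus\{0\}$ and the projective class is undefined; and the conclusion requires that $A$ have infinitely many \emph{distinct} eigenvalues, which the setup (a bounded self-adjoint $A$ with a maximal orthogonal eigenvector family) does not guarantee --- for $A$ a scalar multiple of the identity the statement fails. In (3), you are right that the real issue is reconciling ``not bounded'' with the standing hypothesis that $A$ is bounded on $H$, but your argument from unbounded eigenvalue ratios is a heuristic, not a proof: such ratios are unbounded only when eigenvalues accumulate at $0$, so as stated (3) cannot hold for all operators meeting the paper's hypotheses, and an honest proof must isolate the extra assumption it uses. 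Your handling of (4) --- transporting the sheaf-topological continuity of $A^{\A}$ to the quotient and then to the completion, rather than arguing in the Fubini--Study metric --- is reasonable and consistent with the paper's remark that the base space is not regular, though the passage to the completion still needs the uniform-continuity modulus you are simultaneously denying in (3), which is exactly the delicate point you flag.
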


\begin{remark}
We may define the ``dimension'' of the projective sort in every fiber as the maximum number of mutually orthogonal elements. With this definition, statement \ref{infinitetop} in the above Theorem says that our generic model is infinite dimensional. 
\end{remark}
 \begin{remark}
The fact that $A$ is bounded operator and at the same time continuous does not contradict the fact that these two properties of an operator are equivalent in a Hilbert space. (Neither the fibers nor the generic model are vector spaces.)
\end{remark}

\begin{proof}
We only prove the first statement since the other cases are similar.  

Clearly, it is enough to show that for every natural number $n$ we may find an open set $[l_I )$ in $\F$ such that the corresponding fiber $E_I$ has at least $n$ many orthogonal elements. If $I$ has $n+1$ elements, then we have that $\A$ forces the condition
\begin{align*}
  \max (P([\sigma_i],[\sigma_j])| i,j \in n ) <& \varepsilon
\end{align*}
for every $\varepsilon$.


\end{proof}

\section[Projective Metric Sheaf]{A Metric Sheaf  for a projective  Hilbert space with a unitary evolution operator}\label{projmetsheaf}
In this section we consider the case of a quantum mechanical system described by an operator $A_R$ that parametrically depends on the variable $R$ ($R$ will be interpreted as an element of a suitable topological space $X$). A particular realization will be the action of a unitary evolution operator on a Hilbert space, in which case the operator is time-dependent. Another instance will describe a system with space-dependent interactions. In the former case $R$ is time variable, while in the last this corresponds to a spatial coordinate. We intend to build our structures starting from projective Hilbert spaces just as in section \ref{sec:discrete} and we assume $R$ is an element of a regular topological space $X$. If a solution to the eigenvalue problem of $A_R$ exists for every possible value of $R$, we want to extend such a solution continuously to a neighborhood in $X$. The sheaf is therefore defined as follows
\begin{enumerate}
\item ({\sl Base space} $X$)
 Let $R$ be an element of $X$. We choose $X$ to be a space with a basis of regular open sets.
\item ({\sl Fibers of the Sheaf})
Every fiber is a two sorted topological structure
\begin{equation*}
E_R=( PV_R, I_R; A_R, ||A_R||,  E_{A_R}, P )
\end{equation*}
where (see section~\ref{sec:discrete})
\begin{enumerate}
\item $PV_R$ is the complex projective space of a Hilbert space $V_R$. The space $V_R$ is the domain of a self-adjoint and bounded  finite operator $A_R$ that depends parametrically on $R$. Every operator has associated with itself the real number $||A_R||$ that represents its norm and a closed interval $I_R$ corresponding to its numerical range, i.e., $I_R= \{\bra x , A_R x \ket : ||x||=1 \}$. The last is our second sort with the standard metric of the real set. We can also introduce an operator in $PV_R$ with the same name by
  \begin{align*}
    A_R&:PV_R \to PV_R\\
    &A_R[x]=[A_R x]
\end{align*}
Since $PV_R$ is not a vector space, we may not expect $A_R$ to be linear.
\item We provide $PV_R$ with the Fubini-Study metric as defined in Sec. \ref{sec:discrete}
\item $E_{A_R}$ is a function symbol from the projective sort to the interval $I_R$ interpreted as follows
  \begin{equation*}
    E_{A_R}([x])=\frac{\bra A_R x , x \ket}{\bra x, x \ket}
  \end{equation*}
\end{enumerate}
\item ({\sl Modulus of uniform continuity for} $E_{A_R}$)
The modulus of uniform continuity $\Delta_{E_{A_R}}$ for $E_{A_R}$ is obtained under the assumption that $A_R$ is bounded. In this case,  $A$ is also uniformly continuous with respect to the Euclidean metric. Observe that,
\begin{align*}
  ||A_R||&= \sup_{||x||=1 \: ||y||=1} |\bra y, A_R x \ket |\\
\intertext{Let $x= y +h$, then}
||A_R y -A_R x ||&=||A_R h||\leq ||A_R|| ||h||
\intertext{Now we show that the function $E_{A_R}$ is uniformly continuous with respect to the Euclidean metric of the Hilbert space. It is enough to consider $x$ and $y$ of unit length:}
||\bra A_R y,y\ket -\bra A_R x, x\ket ||&=||\bra A_R (x + h),x+h\ket -\bra A_R x, x\ket ||\\
&=||\bra A_R h,  h\ket+\bra A_R h, x\ket+\bra A_R x, h\ket ||\\
&\leq ||h||^2 \: ||\bra A_R h/||h||,h/||h||\ket||+\\
&\hspace{8mm}||h||\:||\bra A_R h/||h||, x\ket||+||h||\: ||\bra A_R x, h/||h||\ket ||\\
&\leq ||h||(||h|| + 2)||A_R||=\varepsilon_R\\
\intertext{given that $||y-x||=||h||$ we can take as a modulus of uniform continuity }
\delta(\varepsilon) &= \sqrt{1+\frac{\varepsilon}{||A_R||}}-1.
\end{align*}
It can be shown that this also implies that $E_A[x]$ is uniformly continuous as a function from $PV_R$ to $I_R$. We use the fact that $d_{FS}([x],[y])$ equals in magnitude the angle of the geodesic that connects two points $x/ ||x||$ and $y/ ||y||$ in the unit sphere $S^n$. Thus we can define the modulus of uniform continuity respect to $d_{FS}$ according to 
\begin{equation*}
\Delta_{E_{A_R}}(\varepsilon_R)= \arccos \left(  1 - \frac{\delta^2(\varepsilon_R)}{2}\right) 
\end{equation*}
 Similar arguments can be used to show that there is a modulus of uniform continuity for $A^\A$.

\item ({\sl The topology of the sheafspace}) For the projective sort, we define sections in such a way that they respect the basis of the initial Hilbert space $V_R$.
Let $\{x_i^R\}$ be a basis of orthonormal eigenvectors of $A_R$ ordered according to
\begin{equation*}
  x_i^R \leq x_j^R \iff E_{A_R}[x_i^R] \leq E_{A_R}[x_j^R].
\end{equation*}
In case of $ E_{A_R}[x_i^R] = E_{A_R}[x_j^R]$, choose any consistent ordering. Given a regular open set $U$ in $X$, and $c=[c_i ] \in \mathbb{PC}^n$ we define a section $\sigma_c$ as
\begin{align*}
  \sigma_c:&U \to E_{PV_R}\\
\sigma_c(R)&=[\sum c_i x_i^R]
\end{align*}
For the numerical range sort, we define sections as follows 
\begin{align*}
   \mu_c:&U \to E_{I_R}\\
\mu_c(R)&=\frac{\sum |c_i|^2 \lambda_i^R }{\sum|c_i|^2}
\end{align*}
where $\{\lambda_i^R \}$ is the set of real eigenvalues $A_R x_i^R = \lambda_i^R x_i^R$.
\end{enumerate}

It remains to show that $E^\A$ and $P^\A$ are well defined, i.e., that they are continuous. This follows after noticing that $E_A^{\A -1}(\mu_c)= \sigma_c$. 
Next, we investigate a few elements from the theory of the sheaf just defined. Consider first the formula 
\[
\phi_{norm}^{\A_R}=\inf_{[x]}\bigl| E_{A_R}[x]-||A_R||\bigr|
\]
which is a statement about $||A_R||$. In our sheaf $\A \Vdash_R \phi_{norm} < \varepsilon $ for all $\varepsilon >0$. If there is $\sigma_{norm}$, a global section such that $\sigma_{norm}(R)=||A_R||$ for all $R \in X$, then $[\sigma_{norm}]/_{\sim F}$ is a constant in $\A[\F]$ and $\phi_{norm}^{\A[\F]} < \varepsilon$ is true in $\A[\F]$ as well. In that case we can also state that $||A_R||$ is unique. However, note that in this case the norm is not a real number but a section.

In the same fashion, the condition

\begin{align*}
 d_{FS}(A_R[\sigma_{1,0,\dots,0}]/_{\sim\F},[\sigma_{1,0,\dots,0}]/_{\sim\F})&< \varepsilon\\
\end{align*}
is forced in dom$(\sigma_{1,0, \dots, 0})$ and this tells us that $[\sigma_{1,0,\dots,0}]$ is an eigenvector of $A_R$. Also, the condition
and 
\begin{align*}
\bigl| E_A^{\A [\F]}[\sigma_{1,0,\dots,0}]/_{\sim\F} - [\mu_{1,0,\dots,0}]/_{\sim\F} \bigr|< \varepsilon
\end{align*}
identifies the section, if any, in the numerical range sort that should be interpreted as the corresponding eigenvalue.

 We may not have enough tools in our language to let the model know about the dimension of the projective spaces $PV_R$ through local isomorphism to open subsets of $\mathbb{R}^n$. However, the generic model knows about its dimension by means of the projective function $P$. The analogy comes from a classical model of an inner product space. As an example, consider the first order sentence in the language of an inner product space of ``dimension 2'':
\begin{multline*}
\phi_{\text{dim 2}}^{classic}= \exists x_1 \exists x_2 \Bigl(x_1 \neq x_2 \land \bra x_1,x_2 \ket =0 \land \\
\forall x_3 (x_1 \neq x_3 \land x_1 \neq x_3 \land \bra x_1, x_3 \ket \neq 0 \land \bra x_2, x_3 \ket\neq 0 )\Bigr)   
\end{multline*}
we write the analog metric sentence for our model, as follows
\begin{multline*}
  \phi_{\text{dim 2}}^{metric}=
 \inf_{\sigma_1} \inf_{\sigma_2} \max \Bigl(1 \ts d_{FS}(\sigma_1,\sigma_2) ,\: P(\sigma_1, \sigma_2),\\
 \sup_{\sigma_3}\bigl( \max (1\ts d_{FS}(\sigma_1,\sigma_3),\: 1\ts d_{FS}(\sigma_1,\sigma_3),\: 1 \ts P(\sigma_1, \sigma_3),\: 1\ts P(\sigma_2, \sigma_3)\:)\bigr)\Bigr) 
\end{multline*}
and the condition $ \phi_{\text{dim 2}}^{metric}< \varepsilon$ should be forced in a fiber if dim$(PV_R)=2$. As a trivial consequence, the same condition should be satisfied by the generic model if the dimension of $V_R$ is the same for all $R \in X$. A more interesting problem is to find a sheaf whose generic model is infinite dimensional and all whose fibers are finite dimensional. The following lemmas state the properties of the sheaf associated with an infinite dimensional metric generic model. 
  
\begin{lemma}
  The generic model for the above sheaf is infinite dimensional if and only if for every $k\in \omega$, the family $U_k=\{R \in X \: | \: \A \Vdash_R \phi_{dim > k}< 2^{-k}  \}$ is a subset of $\F$ such that $\cap U_k= \emptyset$. 
\end{lemma}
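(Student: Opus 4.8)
The plan is to read $\phi_{dim>k}$ as the metric analogue of ``there exist $k+1$ pairwise orthogonal elements'', i.e.\ $\phi_{dim>k}=\inf_{\sigma_0}\cdots\inf_{\sigma_k}\max_{i<j}P(\sigma_i,\sigma_j)$, so that unwinding the point-forcing clauses for $\inf$ and $\max$ shows that $\A\Vdash_R\phi_{dim>k}<2^{-k}$ means exactly: the fibre $PV_R$ contains $k+1$ elements that are pairwise almost orthogonal, $P^{\A_R}(\cdot,\cdot)<2^{-k}$. By the truth-continuity lemma for section forcing each $U_k$ is open, so membership in the filter $\F$ of regular open sets is meaningful after passing, if necessary, to the regularization $\interior(\mathrm{cl}(U_k))\supseteq U_k$. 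I would first record the standing fact $\bigcap_k U_k=\emptyset$, which follows from finiteness of every fibre: if $\dim V_R=d$, then $k+1>d$ unit vectors obey a Gram/packing bound of the form $\max_{i<j}P\ge (k+1-d)/(dk)$, and this exceeds $2^{-k}$ once $k$ is large relative to $d$; hence every $R$ leaves $U_k$ for large $k$ and no $R$ lies in all of them. (This also shows the right-hand side forces $\F$ to be non-principal, matching the earlier remark that principal filters are uninteresting.)

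For the forward implication, suppose $\A[\F]$ is infinite dimensional and fix $k$. Then there are $k+1$ mutually orthogonal elements $[\tau_0],\dots,[\tau_k]$, i.e.\ $P^{\A[\F]}([\tau_i],[\tau_j])=0$, so $\A[\F]\models\phi_{dim>k}<2^{-k}$ with the infimum witnessed at value $0$. The Metric Generic Model Theorem (Theorem \ref{genmodtheometric}) yields $U\in\F$ with $\A\Vdash_U\phi_{dim>k}<2^{-k}$, and the local-to-point passage built into local forcing gives $\A\Vdash_R\phi_{dim>k}<2^{-k}$ for every $R\in U$, i.e.\ $U\subseteq U_k$. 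Upward closure of $\F$, applied to the regular open set $\interior(\mathrm{cl}(U_k))\supseteq U$, then gives $U_k\in\F$. Together with the standing fact above this is precisely the right-hand side.

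For the converse, assume $U_k\in\F$ for every $k$ and fix $n$; I want $n$ mutually orthogonal elements of $\A[\F]$. Here the explicit sections do the work: since the eigenvectors of $A_R$ are orthonormal, the sections $\sigma_{e_0},\dots,\sigma_{e_{n-1}}$ selecting successive basis eigenvectors satisfy $P^{\A_R}(\sigma_{e_i}(R),\sigma_{e_j}(R))=0$ wherever defined, and they are defined exactly when $\dim V_R\ge n$. It therefore suffices to exhibit a member of $\F$ on which $\dim V_R\ge n$. Choosing $k\ge n$ large enough that the packing bound (read in the contrapositive) forces $R\in U_k\Rightarrow\dim V_R>k\ge n$, we obtain $U_k\subseteq\{R:\dim V_R\ge n\}$ with $U_k\in\F$; restricting the $\sigma_{e_i}$ to $U_k$ gives $n$ sections with domain in $\F$ whose classes are pairwise orthogonal in $\A[\F]$, because $P^{\A[\F]}=\inf_{U\in\F}\sup_{R\in U}P^{\A_R}=0$. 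As $n$ was arbitrary, $\A[\F]$ is infinite dimensional.

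The step I expect to be the main obstacle is the quantitative link, in both directions, between the tolerance $2^{-k}$ appearing in $U_k$ and honest zero-angle orthogonality: one must verify that the Gram/packing estimate engages for the relevant range of $k$, so that almost-orthogonality of $k+1$ vectors at scale $2^{-k}$ genuinely certifies dimension $>k$, and conversely that honest orthogonality in $\A[\F]$ is detected by the $<2^{-k}$ condition. A secondary, bookkeeping difficulty is the passage between the open sets $U_k$ and the regular open sets of $\F$ through regularization, together with checking that the local-forcing clause for nested infima concentrates the point-forcing witnesses onto a single open set, which is where the Maximum Principle (or the truth-continuity lemma) is invoked.
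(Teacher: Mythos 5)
Your proposal is correct, and its skeleton --- the Metric Generic Model Theorem as the bridge between satisfaction in $\A[\F]$ and local forcing on a member of $\F$, together with finiteness of the fibers to force $\bigcap_k U_k=\emptyset$ --- is the same as the paper's. Where you genuinely diverge is at the two points where the paper's proof is softest. First, the paper disposes of $\bigcap_k U_k=\emptyset$ by remarking that a point in the intersection would sit under an infinite-dimensional fiber; you make this quantitative with the Gram/packing bound $\max_{i<j}P\ge (k+1-d)/(dk)$ for $k+1$ unit rays in dimension $d$, which is precisely the estimate needed to certify that $2^{-k}$-almost-orthogonality of $k+1$ elements really witnesses dimension $>k$ once $k$ is large, and it is the ingredient the paper's one-line remark silently relies on (your claim $R\in U_k\Rightarrow\dim V_R>k$ holds for $k\ge 4$, which is all you need). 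Second, the paper's converse is purely formal: given $\varepsilon$ it picks $k$ with $2^{-k}<\varepsilon$ and pushes local forcing on $U_k$ through the GMT, concluding only the approximate statement $\A[\F]\models\phi_{dim>n}<\varepsilon$ for every $\varepsilon$; you instead restrict the eigenvector sections $\sigma_{e_0},\dots,\sigma_{e_{n-1}}$ to $U_k$ and compute $P^{\A[\F]}=\inf_{U}\sup_{R}P^{\A_R}=0$ directly, producing honestly orthogonal elements of the generic model. Your version thus proves the stronger reading of ``infinite dimensional'' from the Remark in Section~\ref{sec:discrete} (exact witnesses rather than an unattained infimum), at the cost of invoking the packing bound a second time to guarantee $\dim V_R\ge n$ on $U_k$. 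The bookkeeping issue you flag --- $U_k$ is open by truth continuity but need not be regular open, so strictly one should pass to $\interior(\mathrm{cl}(U_k))$ before asserting membership in the ultrafilter of regular open sets --- is real, is not addressed in the paper's proof either, and your regularization is the right fix.
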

\begin{proof}
  ($\Rightarrow$) Suppose that $\A[\F]\models \phi_{dim>n} < \varepsilon $ for any $\varepsilon > 0$. Then it is true that there is a family $\{U_k \mid  k \in \omega\}$ such that $\A \Vdash _{U_k} \phi_{dim>k} < 2^{-k}$ and we might assume that $U_{k+1} \supset U_{k}$. Thus, $\cap_k U_k = \emptyset$ otherwise, there would exist an infinite fiber and this would contradict the lemma of truth continuity.

($\Leftarrow$) Given $\varepsilon > 0$, there exists $k$ such that $2^{-k}< \varepsilon$ and then $\A \Vdash_{U_k} < \varepsilon $. Thus $\A[\F] \models  \phi_{dim>n} < \varepsilon$ for any $\varepsilon$.  
\end{proof}

\section{A Metric Sheaf for noncommuting observables with continuous spectra.}\label{sec:physics}

 The following discussion will concentrate on the quantum mechanics for position and momentum operators, but can be extended to other observables with continuous spectrum.
 
 The axiomatic framework of quantum mechanics dictates that every {\sl observable} must be described by a self-adjoint operator acting in an appropriate Hilbert Space. Thus, we expect to find for position and momentum operators $\hat x $ and $\hat p$, with domain in the Hilbert space of the system, representing such observables\cite{dirac1981}. Using Dirac's notation, the existence of elements in the Hilbert space, denoted by  $| x \rangle$ and $| p \rangle$, such that the eigenvalue equations $\hat x | x \rangle = x | x \rangle$ and $\hat p | p \rangle = p | p \rangle$ hold, with $x, p \in \mathbb{R}$, is claimed. For many systems $x$ and $p$ can take any value in a measurable subset of $\mathbb{R}$ and therefore we call $\hat x$ and $\hat p$ operators with continuous spectrum.

 The structure of the physical Hilbert space provided with these operators differs from the standard axiomatic definition adopted in Analysis. In particular, the inner product for the physical Hilbert space is not only complex valued, but can take values on the space of distributions. Using again Dirac's notation, one defines the inner product of two position eigenstates $|x_o \rangle$, $|x_1 \rangle$ by
 \begin{equation}
   \label{eq:dotx}
   \langle x_o | x_1 \rangle = \delta (x_o - x_1),
 \end{equation}
where $\delta(x)$ is the Dirac's delta function. In the same way 
\begin{align}
     \langle p_o | p_1 \rangle =& \delta (p_o - p_1),
\end{align}
which implies that neither position nor momentum eigenstates can be normalized. In addition, the inner product between position and momentum eigenstates is given by 
\begin{align}
     \langle p_o | x_o \rangle =& \frac{1}{\sqrt{2 \pi \hbar}}e^{-i x_o p_o / \hbar}.
\end{align}
  The physical Hilbert space has two basis sets: $\{ |x\rangle | x \in \mathbb{R} \}$ and $\{ |p\rangle | p \in \mathbb{R} \}$ which are related to each other by the relations  
\begin{align}
  |p \rangle  = \frac{1}{\sqrt{2 \pi \hbar}} \int dx e^{-i x_o p_o / \hbar} |x \rangle \label{eq:FTp},\\
  |x \rangle  = \frac{1}{\sqrt{2 \pi \hbar}} \int dp e^{+i x_o p_o / \hbar} |p \rangle \label{eq:FTx},  
\end{align}
Letting $\hat I$ be the identity operator for the physical Hilbert space and $[A,B]=AB-BA$, we find that $[\hat x,\hat p]= i \hbar \hat I$. This result has as a consequence that the observables $\hat p$ and $\hat x$ cannot be simultaneously measured in the lab with absolute accuracy (Heisenberg's uncertainty principle). In the basis of position eigenstates, the representation for position and momentum operators is given by  
  \begin{align}
    \hat x  \to& M_x , \label{eq:xrep}\\
    \hat p  \to& -i \hbar \frac{\partial}{\partial x} \label{eq:prep},
  \end{align}
where $M_x$ is the multiplication operator by the constant $x$. Thus $\hat p$ is a differential operator in $\mathcal{L}^2(\real,\mu)$ and therefore is only defined in a subset of the whole Hilbert Space. We can also find a representations for these operators in the basis given by the momentum eigenstates in which case $\hat x$ is a differential operator and $\hat p$ a multiplication operator.

  The work from Laurent Schwartz on distributions\cite{schwartz1959} helped to clear up the notions that Dirac had introduced in the conceptual framework of quantum mechanics and that we have sketched above. In the subsequent discussion, we will find that a model-theoretical picture for the quantum mechanics of position and momentum operators is possible in a subset of the Schwartz space which we define next 
\begin{definition}
The Schwartz space on $\real ^n$ is the function space given by
\begin{equation*}
 S\left(\real ^{n}\right)=\left\{f\in C^{\infty }(\real ^{n}):\|f\|_{\alpha ,\beta }<\infty \quad \forall \alpha ,\beta \in \mathbb {N}^{n}\right\}
\end{equation*}
where $\alpha$, $\beta$ are multi-indices, and $C^\infty(\real ^n)$ is the set of smooth complex valued functions from $\real^n$, and
\begin{equation*}
\|f\|_{\alpha ,\beta }=\sup _{x\in \real ^{n}}\left|x^{\alpha }D^{\beta }f(x)\right|.
\end{equation*}
\end{definition}

 The sheaf that we are going to introduce, finds its motivation in the following definition for the Dirac's distribution in $\mathcal{L}^2(\mathbb{R})$ :
  \begin{equation}
    \label{eq:diracd}
    \lim_{\tau \to 0} \frac{1}{\tau \sqrt \pi} e^{-x^2/\tau^2} = \delta (x) ,
  \end{equation}
with the limit taken in the sense of distributions. This suggests that an {\sl imperfect} representation $\phi_{\tau}(x, x_o)$ for the physical vector state $| x_o \rangle$ in $\mathcal{L}^2(\mathbb{R})$ is
\begin{equation}
  \label{eq:phix}
  \phi_{\tau}(x, x_o) = \frac{1}{\tau \sqrt{ 2 \pi \hbar}} e^{-(x - x_o)^2/2\hbar \tau^2}.
\end{equation}
The family of elements $\{ \phi_{\tau}(x, x_o) \}$ is a subset of the Schwartz space and, with the inner product in $\mathcal{L}^2(\real)$, we find that
\begin{equation}
  \label{eq:dotprod}
  \langle \phi_{\tau}(x, x_o), \phi_{\tau}(x, x_1) \rangle = \int dx \phi_{\tau}(x, x_o) \phi_{\tau}(x, x_1) = \phi_{2 \tau}(x_1, x_o),
\end{equation}
 and moreover the Fourier Transform acts on this family as
 \begin{align}
   FT(\phi_{\tau}(x, x_o)) =&  \frac{1}{\sqrt{2 \pi \hbar}} \int dx e^{-i (x - x_o) (p - p_o) / \hbar}\phi_{\tau}(x, x_o) = \phi_{1/\tau}(p, p_o).
 \end{align}
We will find useful to replace the parameter $\tau$ in Eq.\ \eqref{eq:phix} by the pair $(\tau',t)$ in order to introduce time dynamics, letting $\tau$ range in $\real$ while $t$ will be an element in $\co$. Thus we define
\begin{equation}
  \label{eq:phixt}
  \phi_{(\tau, t)}(x, x_o) = \frac{1}{\sqrt{ 2 \pi \hbar (\tau^2 +t)}} e^{-(x - x_o)^2/2\hbar(\tau^2+t)}.
\end{equation}

Consequently, the physical {\sl inner product} in Eq.\ \eqref{eq:dotx}  and the Fourier transform in Eq.\ \eqref{eq:FTp} can be defined as a set of linear transformations in the family of Gaussian $\phi_{(\tau,t)} (x, x_o)$, if we accept some imperfection in the representation given by the parameter $\tau$. In order to find a representation for the operators $\hat x$ and $\hat p$, we will need to introduce a larger subspace of the Schwartz space that is closed under differentiation. The basis set for such vector space $\mathcal{U}_{\tau}(\mathbb{R})$ is given by  
\begin{align}
    \mathcal{U}_{\tau}(\mathbb{R}) =& \{q(x-x_o)\phi_{(\tau,t)}(x,x_o) : x, x_o \in \mathbb{R} , t \in \co  \},
\end{align}
with $q(x-x_o)$ representing a polynomial of arbitrary order. From Eqs.\ \eqref{eq:xrep} and \eqref{eq:prep} we find that $\hat x$ and $\hat p$ can be represented by the linear transformations
\begin{align} 
  \hat x :& \langle \mathcal{U}_{\tau)}(\mathbb{R}) \rangle \to \langle \mathcal{U}_{\tau}(\mathbb{R}) \rangle\\
& \hat x \left[q(x-x_o)\phi_{(\tau,t)}(x,x_o)\right] = (x-x_o)q(x-x_o)\phi_{(\tau,t)}(x,x_o)\\
  \hat p :& \langle \mathcal{U}_{\tau}(\mathbb{R}) \rangle \to \langle \mathcal{U}_{\tau}(\mathbb{R}) \rangle\\
& \hat p \left[q(x-x_o)\phi_{(\tau,t)}(x,x_o)\right] = -i \hbar q'(x-x_o)\phi_{(\tau,t)}(x,x_o) \notag\\
&\hspace{4cm}+ \frac{i}{\sqrt{\tau^2+t}}(x- x_o)q(x-x_o)\phi_{(\tau,t)}(x,x_o)), 
\end{align}
where we have denoted by $ \langle \mathcal{U}_{\tau}(\mathbb{R}) \rangle$ the space spanned by this basis.
Notice by direct computation on an arbitrary element that these definitions imply that $[\hat x,\hat p]=i \hbar \hat I$, with $\hat I$ the identity operator. 

We now introduce representations for the operators $e^{i t \hat x}$ and $e^{i t \hat p}$. The definition to be used next, follows from the properties of the operator $e^{i t M_x}$ and the unitary equivalence, via Fourier transformations, between $-i \hbar \frac{\partial}{\partial x}$ and $M_p$  \cite{Conway1990}. First let us introduce a new vector space with basis
\begin{equation}
  \mathcal{V}_{\tau} (\mathbb{R}) = \{ q(p - p_o) \phi_{1/{(\tau,t)}}(p, p_o) : p, p_o \in \mathbb{R} , t \in \co\}
\end{equation}
with 
\begin{equation}
  \phi_{1/{(\tau,t)}}(p, p_o) = \sqrt{\frac{(\tau^2 +t)}{ 2 \pi \hbar}} e^{-(\tau^2+t)(p - p_o)^2/2\hbar},
\end{equation}

such that the Fourier Transform is represented as a function between the spaces spanned by $\mathcal{U}_{\tau}(\real) $ and $\mathcal{V}_{\tau}(\real)$ by
\begin{align}
  FT:& \mathcal{U}_{\tau}(\mathbb{R}) \to \mathcal{V}_{\tau}(\mathbb{R}) \label{eq:FTUtoV}\\
  &FT(q(x-x_o)\phi_{(\tau,t)}(x,x_o)) = \frac{1}{\sqrt{\tau^2+t}} q(p-p_o)\phi_{1/{(\tau,t)}}(p,p_o).
\end{align}
In order to describe the unitary operators that depend on position and momentum operators we define
\begin{align}
  e^{itf(\hat x)} \phi_{(\tau,t)}(x, x_o) =& e^{itf((x-x_o))} \phi_{(\tau,t)}(x, x_o) \label{eq:unitaryx}\\
  e^{itf(\hat p)} \phi_{(\tau,t)}(x, x_o) =& FT^{-1} e^{itf((p-p_o))} FT \phi_{(\tau,t)}(x, x_o) \label{eq:unitaryp}
\end{align}
where $f(x)$ is a continuous function on $x$. All these properties are natural from the vector spaces that we have defined and can be proven after integration in $\mathcal{L}^2(\real)$. Importantly, the result of these integral transformations is recast in simple transformations between these families of Gaussians. By adapting these results, we can define a metric sheaf for a free particle in a one dimensional space (such that classical phase-space is $\mathbb{R}^2$ with physical states represented by points in phase-space of the form $(x, p)$).

\begin{definition}
 The triple $\mathfrak{A} =(E,X,\pi)$ where
\begin{itemize}
\item $X = \mathbb{R}^+$ is the base space with the product topology.
\item For $\tau \in X$ we let $E_{\tau}$ be a two sorted metric model where
\begin{itemize}
\item $\mathcal{U}_{\tau}$ and $\mathcal{V}_{\tau}$ span the universe for each sort. 
\item Every sort has is a metric space with the metric induced by the norm in $\mathcal{L}^2(\real)$.
\item Every sort is a model in the language of a vector space, with symbols for the {\sl inner product transformation} $\langle,\rangle_{\mathcal{V}}$ and $\langle,\rangle_{\mathcal{U}}$, to be interpreted such that
\begin{align}
\langle q(x_o-x)\phi_{(\tau, t_1)}(x_o-x),& r(x_1-x) \phi_{(\tau,t_1)}(x_1-x) \rangle_{\mathcal{U}} \notag\\
 =& q(x_o-x_1)r(x_o-x_1)\phi_{(\tau, t_1+t_2)}(x_o-x_1)\\ 
\langle q(p_o-p)\phi_{1/(\tau, t_1)}(p_o-p),& r(p_1-p)\phi_{1/(\tau,t_1)}(p_1-p) \rangle_{\mathcal{V}} \notag\\
 =& q(p_o-p_1)r(p_o-p_1) \phi_{1/(\tau, t_1+t_2)}(p_o-p_1) 
\end{align}

\item function symbols for $FT$ and $FT^{-1}$ to be interpreted as in Eq.\ \eqref{eq:FTUtoV}.
\end{itemize}
\item The sheaf is constructed as the disjoint union of fibers: $E = \sqcup_{\tau \in X} E_{\tau}$ 
\item Sections are defined such that if $\tau \in U \subset X$, 
\[
\sigma_{q,x_o,p_o, t}(\tau) = \left(q(x-x_o)\phi_{(\tau,t)}(x,x_o)\, ,\, q(p-p_o)\phi_{1/(\tau,t)}(p,p_o)\right).
\]
\item $\pi$, the local homeomorphism, is given by $\pi(\psi) = \tau $ if $\psi \in E_\tau$.
\end{itemize}
\end{definition}
 A few remarks are in order. First, the inner products $\langle , \rangle_{\mathcal{U}}$ and $\langle , \rangle_{\mathcal{V}}$ are not the objects frequently defined as the inner product in a Hilbert space. Instead, they are our representation for the physical inner product as defined by Dirac in each sort. Second, notice that in contrast with expression in  Eq.\ \eqref{eq:dotprod}, we enforce with our definition that image element of the mappings $\langle , \rangle_{\mathcal{U}}$ and $\langle , \rangle_{\mathcal{V}}$ preserves the imperfection parameter $\tau$. By doing this the inner products are well defined on every section. Third, since $X$ is a regular topological space it admits a nonprincipal filter of regular open sets.  We are particularly interested in two kinds of generic metric models. In the first kind we look at generic models that capture the limit of vanishing $\tau$, for which we take the nonprincipal ultrafilter induced by the family of open regular sets $\{(0, 1/n) : n \in \mathbb{N} \}$. From the structure of the sheaf defined above, limit elements in the generic model coming from the $\mathcal{U}$ sort with $t =0$ must approach to the Dirac's delta in position. Complementarily, the generic metric model that we obtain when we take the nonprincipal ultrafilter induced by the family of open regular sets $\{(n, \infty) : n \in \mathbb{N} \}$ must contain limit elements that represent Dirac's distributions in momentum space.   

 Next, we show how this metric sheaf permits the computation of the quantum mechanical amplitude for a free particle, overcoming drawbacks found in previous works\cite{Hirvonen2016}. The energy eigenstates of a physical systems in quantum mechanics are characterized by the Hamiltonian operator $\hat H$ and, in the case of a free particle this corresponds to 
\begin{equation}
  \label{eq:Hfreeparticle}
  \hat H= \frac{\hat p ^2}{2 m}. 
\end{equation}
In terms of this operator, we may define the quantum mechanical propagator $K(x_1, x_o, t)$ for a free particle that ``travels'' from $x_o$ to $x_1$ in configuration space by
\begin{align}
K(x_1, x_o, t) =& \langle x_1 ,U(t) x_o \rangle \label{eq:propfree}
\intertext{with}
U(t) =& e^{-it \hat H/ \hbar} = e^{-it  \hat p ^2/ 2 m \hbar}   
\end{align}
We can calculate at each fiber $E_{\tau}$ the propagator $K(x_1, x_o, t)$ in Eq.\ \eqref{eq:propfree} as follows
\begin{align}
  |x_o \rangle  =&  \phi_{{(\tau,0)}}(x,x_o) \\
  U(t) | x_o \rangle =& e^{-it  \hat p ^2/ 2 m \hbar} \phi_{(\tau,0)}(x,x_o) \\
  =& FT^{-1} e^{-it  M_p ^2/ 2 m \hbar} FT \phi_{(\tau,0)}(x,x_o) \\
  =& FT^{-1} e^{-it  M_p ^2/ 2 m \hbar} \left( \frac{1}{\tau}\phi_{1/(\tau,0)}(p,p_o)\right) \\ 
  =& FT^{-1} \left( e^{-it (p-p_o) ^2/ 2 m \hbar} \frac{1}{\tau}\phi_{1/(\tau,0)}(p,p_o)\right) \\
  =& FT^{-1} \frac{1}{\sqrt{2 \pi}} e^{-(\tau^2+it/m) (p-p_o) ^2/ 2 \hbar}\\
  =& \frac{1}{\sqrt{2 \pi( \tau^2+it/m)}} e^{-(x-x_o) ^2/ 2 \hbar (\tau^2+it/m)} \\
  =& \phi_{(\tau,it/m)}(x,x_o)\\
\langle x_1,  U(t) x_o \rangle =& \langle \phi_{\tau}(x,x_1), \phi_{(\tau,it/m)}(x,x_o) \rangle_{\mathcal{U}}\\
=& \phi_{(\tau,it/m)}(x_1,x_o)\\
=& \frac{1}{\sqrt{2 \pi( \tau^2+it/m)}} e^{-(x_1-x_o) ^2/ 2 \hbar (\tau^2+it/m)} \label{eq:ApproxProp}
\end{align}

The result in Eq.\ \eqref{eq:ApproxProp} is the {\sl imperfect} propagator at the fiber $E_\tau$. If we were to take the limit $\tau \to 0$ in this expression  we will recover the \underline{exact} form for the quantum mechanical amplitude, and this is precisely what our choice of the ultrafilter in the base space does: We take the nonprincipal ultrafilter induced by the family of open regular sets $\{(0, 1/n) : n \in \mathbb{N} \}$.  Thus in the Generic model $\mathfrak{A}[\mathbb{F}]$ we recover the exact propagator as a limit element.

 Our calculation for a free particle overcomes some of the
 difficulties found in Ref.\citenum{Hirvonen2016} in two aspects: 1)
 the normalization of the propagator is correct (c.f. page 21 in that
 reference.) 2) The arbitrary scaling factors introduced in the time
 evolution operators (c.f. bottom part of page 19), necessary in their
 derivation to obtain the right form for the propagator are not
 required here. Furthermore, even though their construction is fine
 mathematically speaking, it appears to miss the following
 essential aspect of physics: the ``eigenstates'' in position an
 momentum are not normalizable and the physical inner product can take
 values in the space of distributions. In contrast, one advantage of
 their formulation is that (unlike our formulation in terms of metric sheaves),
 they obtain a simple
 derivation for the propagator for the harmonic oscillator, in the
 cited reference (achieved at the expense of very complex
 scaling relations; c.f. bottom part page 23, Ref.\
 \citenum{Hirvonen2016}).


\section{Conclusion}\label{sec:conclusion}
We have revisited the basic definitions and properties of the model
theory of metric sheaf structures, with a particular interest on the
connections between local forcing on the sheaf and the satisfaction
relation in the metric generic model. The last connection is fully
described by the Generic Model Theorem.  These results have been
successfully applied to the study of physical systems in two cases:
quantum mechanical systems represented by a Hermitian operator with
pure point spectrum and quantum systems with two noncommuting
operators with a continuous spectrum. For an operator with pure point
spectrum we introduced a sheaf that describes, via forcing on
finite-dimensional projective Hilbert spaces, the structure of the
operator in a metric model for an infinite-dimensional projective
Hilbert space (The Generic Model). For a system with two noncommuting
operators, we specialized to the case of position and momentum. After
describing the properties of the physical Hilbert space in such
setting, we constructed a metric model for a free particle from a
metric sheaf where two subsets of the Schwartz space are the universes
in each two-sorted fiber. We found that the generic metric model has
as a limit element the \textbf{quantum mechanical propagator}
for this system, and that its properties can be studied via forcing on
the metric sheaf. Further work in the structure of the metric sheaf
will look for accurate computations of propagators of other more
complex systems, including the harmonic oscillator.

\bibliography{msheafQM}

\end{document}